\documentclass[12pt]{amsart}

\usepackage{fullpage,url,amssymb,amsmath,amsthm,amsfonts,mathrsfs,enumitem,comment,latexsym}
\usepackage[usenames,dvipsnames]{color}
\usepackage[pagebackref = true, colorlinks = true, linkcolor = blue, citecolor = Green]{hyperref}
\usepackage[alphabetic,lite]{amsrefs}
\usepackage[all, cmtip]{xy} 
\usepackage{amscd}   
\usepackage{rotating}
\usepackage{graphicx}

\DeclareFontEncoding{OT2}{}{} 
\newcommand{\textcyr}[1]{%
 {\fontencoding{OT2}\fontfamily{wncyr}\fontseries{m}\fontshape{n}\selectfont #1}}

\newcommand{\defi}[1]{\textsf{#1}} 
\def\act#1#2%
  {\mathop{}%
   \mathopen{\vphantom{#2}}^{#1}%
   \kern-3\scriptspace%
   #2}
   
\usepackage{color} 

\newcommand{\Z}{{\mathbb Z}}
\newcommand{\Q}{{\mathbb Q}}

\newcommand{\F}{{\mathbb F}}
\newcommand{\A}{{\mathbb A}}

\newcommand{\G}{{\mathbb G}}

\newcommand{\ksep}{{k_s}}

\newcommand{\calA}{{\mathcal A}}

\newcommand{\Sha}{{\mbox{\textcyr{Sh}}}}


\newcommand{\To}{\longrightarrow}


\DeclareMathOperator{\Gal}{Gal}

\DeclareMathOperator{\HH}{H}

\DeclareMathOperator{\Aut}{Aut}

\DeclareMathOperator{\res}{res}

\newtheorem{Theorem}{Theorem}
\newtheorem{Lemma}[Theorem]{Lemma}
\newtheorem{Proposition}[Theorem]{Proposition}
\newtheorem{Corollary}[Theorem]{Corollary}

\newtheorem{Remark}[Theorem]{Remark}

\numberwithin{equation}{section}

\begin{document}

\title{Local-global principles for Weil-Ch\^atelet divisibility in positive characteristic}
\author{Brendan Creutz}
\address{School of Mathematics and Statistics, University of Canterbury, Private Bag 4800, Christchurch 8140, New Zealand}
\email{brendan.creutz@canterbury.ac.nz}
\urladdr{http://www.math.canterbury.ac.nz/\~{}bcreutz}
\author{Jos\'e Felipe Voloch}
\address{School of Mathematics and Statistics, University of Canterbury, Private Bag 4800, Christchurch 8140, New Zealand and Department of Mathematics, University of Texas, Austin, TX 78712, USA}
\email{felipe.voloch@canterbury.ac.nz}
\urladdr{http://www.math.canterbury.ac.nz/\~{}f.voloch}

\begin{abstract}
We extend existing results characterizing Weil-Ch\^atelet divisibility of locally trivial torsors over number fields to global fields of positive characteristic. Building on work of Gonz{\'a}lez-Avil{\'e}s and Tan, we characterize when local-global divisibility holds in such contexts, providing examples showing that these results are optimal. We give an example of an elliptic curve over a global field of characteristic $2$ containing a rational point which is locally divisible by $8$, but is not divisible by $8$ as well as examples showing that the analogous local-global principle for divisibility in the Weil-Ch\^atelet group can also fail. 
\end{abstract}

\maketitle

\section{Introduction}

Let $k$ be a global field and $A/k$ an abelian variety. The flat cohomology group $\HH^1(k,A)$ is called the Weil-Ch\^atelet group. It parameterizes $k$-torsors under $A$ and contains the Shafarevich-Tate group,
\[
	\Sha(k,A) := \ker\left(\HH^1(k,A) \to \prod \HH^1(k_v,A)\right)\,,
\]
where the product runs over all completions $k_v$ of $k$. It is conjectured that $\Sha(k,A)$ is finite, and in particular that it contains no nontrivial divisible elements. Cassels asked whether the elements of $\Sha(k,A)$ are divisible in the larger group $\HH^1(k,A)$  \cite[Problem 1.3]{CasselsIII}. Closely related to this is the question of whether, for given integers $r \ge 0$ and $m$, the map
\begin{equation}\label{eq:1}
	\HH^r(k,A)/m\HH^r(k,A) \to \prod \HH^r(k_v,A)/m\HH^r(k_v,A)
\end{equation}
is injective. Indeed, a positive answer to Cassels' question follows from the injectivity of these maps for $r = 1$ and $m \ge 1$. In the case that the characteristic $p$ of $k$ does not divide $m$ this has been investigated in \cite{Bashmakov,CasselsIV,CipStix,CreutzWCDiv,CreutzWCDiv2,DZ1,DZexamples,DZ2,LawsonWuthrich,PRV-2,PRV-3}. In particular, when $p \nmid m$ it is known that the \defi{local-global principle for divisibility by $m$ in $\HH^r(k,A)$ holds} (i.e., that the map in~\eqref{eq:1} is injective) in all of the following cases:
	\begin{enumerate}
		\item\label{r = 2} $r \ge 2$ (\cite[Theorem 2.1]{CreutzWCDiv2});
		\item\label{Tate'sResult} $A$ is an elliptic curve, and $m$ is prime (\cite[Lemma 6.1]{CasselsIV});
		\item\label{ell5} $A$ is an elliptic curve over $\Q$ and $m = \ell^n$ is a power of a prime $\ell \ge 5$ \cite[]{CipStix} \cite[Corollary 4]{PRV-3} and \cite[Theorem 24]{LawsonWuthrich};
		\item\label{ellbig} $A$ is an elliptic curve over a number field $k$ and $m = \ell^n$ is a power of a sufficiently large prime, depending only on the degree of $k$ \cite{CipStix,PRV-2}.
	\end{enumerate}
	
	On the other hand, there are examples showing that the local-global principle for divisibility by $m$ in $\HH^r(k,A)$ can fail in the following situations:
	\begin{enumerate}[resume]
		\item\label{m=2} $r = 0$, $A$ is an abelian surface over $\Q$ and $m = 2$ \cite[p. 61]{CasselsFlynn};
		\item $r = 0$, $A$ is an elliptic curve over $\Q$ and $m = 2^n$ with $n \ge 2$ \cite{DZ2};
		\item\label{it:7} $r \in \{0,1\}$, $A$ is an abelian variety over $\Q$ and $m$ is any prime number \cite{CreutzWCDiv};
		\item\label{it:8} $r \in \{0,1\}$, $A$ is an elliptic curve over $\Q$ and $m = 3^n$ with $n \ge 2$ \cite{CreutzWCDiv2}.
	\end{enumerate}
	Moreover, in~\eqref{it:7} and~\eqref{it:8} the examples given for $r = 1$ satisfy $\Sha(k,A) \not\subset m\HH^1(k,A)$ showing that the answer to Cassels' original question is also no.

	In this paper we are interested in these questions when $m = p^n$ is a power of the characteristic of $k$. In this case \cite[Proposition 3.5]{GA-THassePrinciple} shows that for an elliptic curve $A/k$, the local-global principle for divisibility by $p^n$ in $\HH^r(k,A)$ holds except possibly if $r \in \{0,1\}$, $p = 2$ and $n \ge 3$ (for $r \ge 2$ see Lemma~\ref{lem:1}). Their results imply that the only possible counterexamples must be non-constant elliptic curves with $j$-invariant in $k^8$ (see Corollary \ref{cor:jinv} below). We show that this is sharp and, moreover, that Cassels' question has a negative answer over global fields of positive characteristic. Specifically we prove the following.
	
	\begin{Theorem}[Proposition~\ref{prop:nonisoH1example}]\label{thm:H1example}
		There exists a non-isotrivial elliptic curve $E$ over $k = \F_2(t)$ with $j(E) \in k^8$ such that $\Sha(k,E) \not\subset 8\HH^1(k,E)$.
	\end{Theorem}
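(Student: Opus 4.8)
The plan is to detect the non-divisibility as a nonzero obstruction class in flat cohomology and to pin it down by duality; since $p=2$ divides $m=8$ I work throughout with fppf cohomology. Applying the long exact sequence of $0 \to E[8] \to E \xrightarrow{8} E \to 0$, a class $c \in \HH^1(k,E)$ is divisible by $8$ if and only if its image $\delta(c)$ under the connecting map $\HH^1(k,E) \to \HH^2(k,E[8])$ vanishes. If in addition $c \in \Sha(k,E)$, then $c$ is everywhere locally trivial, so $\delta(c)$ lies in $\Sha^2(k,E[8]) := \ker\big(\HH^2(k,E[8]) \to \prod_v \HH^2(k_v,E[8])\big)$. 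Thus it suffices to construct $E$ and a class $c \in \Sha(k,E)$ with $0 \ne \delta(c) \in \Sha^2(k,E[8])$, for then $c \in \Sha(k,E)$ but $c \notin 8\HH^1(k,E)$.

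The conceptual source of such an example, and the reason the threshold is $n=3$, is the factorization $[2] = V \circ F$ of multiplication by $2$ into Frobenius $F\colon E \to E^{(2)}$ and Verschiebung $V\colon E^{(2)} \to E$, so that $[8]=(V\circ F)^3$. The purely inseparable part $F^3$ acts on functions by the eighth-power map, so Frobenius-divisibility by $2^3$ is governed by eighth powers in $k$; this is precisely why Corollary~\ref{cor:jinv} forces $j(E) \in k^8$, and it tells me to choose a non-isotrivial \emph{ordinary} elliptic curve $E/k$ whose $j$-invariant is a nonconstant eighth power, via an explicit Weierstrass model with $a_6 = 1/j(E) \in k^8$ (ordinarity is automatic since $j \ne 0$, and gives the connected-\'etale structure used below).

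To detect $\delta(c)$ I would invoke the flat Poitou--Tate duality of Gonz\'alez-Avil\'es and Tan: the Weil pairing makes $E[8]$ its own Cartier dual, and the resulting duality produces a perfect pairing of $\Sha^2(k,E[8])$ with $\Sha^1(k,E[8])$. So it suffices to produce a class in $\Sha^1(k,E[8])$ pairing nontrivially with the image of $\Sha(k,E)$. For ordinary $E$ the connected-\'etale sequence reads $0 \to \mu_8 \to E[8] \to \calE \to 0$ with $\calE$ an \'etale form of $\Z/8$, and I would compute $\Sha^1(k,E[8])$ through this filtration: Kummer theory gives $\HH^1(k,\mu_8) = k^*/(k^*)^8$ for the infinitesimal part, while the \'etale quotient is handled by Artin--Schreier--Witt theory. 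The hypothesis $j(E) \in k^8$ is exactly what makes the governing class lie in $\big(k_v^*\big)^{8}$ locally at every place while remaining globally nontrivial, yielding the required element of $\Sha^1(k,E[8])$.

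I expect the main obstacle to be this flat-cohomology computation together with the local bookkeeping: because $\mu_8$ is infinitesimal in characteristic $2$, the connected part is invisible to Galois cohomology and local triviality must be verified directly through eighth powers in the completions $k_v^*$, simultaneously with the \'etale Artin--Schreier--Witt contributions. The most delicate point is to check that the constructed class indeed pairs nontrivially with the image of $\Sha(k,E)$ (and not merely with classes coming from $E(k)/8\,E(k)$), and that the phenomenon genuinely appears at $n=3$ and not at $n \le 2$, in agreement with Proposition~3.5 of Gonz\'alez-Avil\'es and Tan.
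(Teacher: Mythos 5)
Your framework is the right one -- the obstruction to $8$-divisibility of $c\in\Sha(k,E)$ is $\delta(c)\in\Sha^2(k,E[8])$, which is detected by the perfect flat duality with $\Sha^1(k,E[8])$ -- and this is indeed the skeleton of the paper's argument (via Theorem~\ref{thm:characterizedivisibility}). But both of the substantive steps are left open, and the methods you sketch for them would not close the gaps. First, you never produce a curve with $\Sha^1(k,E[8])\neq 0$, and the criterion you propose is not the right one: $j(E)\in k^8$ is only the \emph{necessary} condition that $E[8](\ksep)\simeq\Z/8\Z$ be full. By the Main Theorem of Gonz\'alez-Avil\'es--Tan, $\Sha^1(k,E[8])$ is computed entirely from the Galois module of separable points -- the infinitesimal part $\mu_8$ contributes nothing -- so your plan of tracking eighth powers in $k_v^*$ through the connected-\'etale sequence is chasing the wrong part of $E[8]$. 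What actually governs nonvanishing is a Grunwald--Wang condition (Theorem~\ref{thm:computeSha1}): $\Gal(k(E[8])/k)$ must be noncyclic (hence $\simeq(\Z/2)^2$ inside $(\Z/8)^\times$) with all decomposition groups cyclic. Verifying this requires an explicit model; the paper uses $y^2+xy=x^3+t^8x^2+(t^{16}+1)/t^8$, computes $k(E[8])=k(u,v)$ with $u^2+u=t$, $v^2+v=(t^2+1)/t$ via Lemma~\ref{lem:8tors}, and checks the decomposition groups.

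Second, and more seriously, you flag as ``the most delicate point'' the need to show your class in $\Sha^1(k,E[8])$ pairs nontrivially with $\delta(\Sha(k,E))$, but you offer no mechanism for this, and it does not follow from duality alone. A nonzero $x\in\Sha^1(k,E[8])$ could die in $\Sha(k,E)$ (e.g.\ by coming from a rational point), or could land in the divisible part of $\Sha(k,E)$, in which case it is orthogonal to everything. The paper needs three additional inputs here: (i) the compatibility $\langle\iota(x),y\rangle_1=\langle x,\delta(y)\rangle_2$ between the Cassels--Tate pairing and the $\Sha^1\times\Sha^2$ pairing, which is the content of Theorem~\ref{thm:characterizedivisibility} and occupies all of Section~\ref{sec:ProofOfTheorem3} (it is not an off-the-shelf consequence of flat Poitou--Tate duality); (ii) the fact that the chosen curve has analytic rank $0$, so by Milne--Tate (Theorem~\ref{thm:BSD}) $E(k)$ has rank $0$, $\Sha(k,E)$ is finite, and the Cassels--Tate pairing is nondegenerate; and (iii) Lemma~\ref{lem:torsion}, which uses rank $0$ to show $\iota:\Sha^1(k,E[8])\to\Sha(k,E)$ is injective. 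Without finiteness of $\Sha$ and nondegeneracy of the pairing -- inputs your proposal never mentions -- there is no way to convert ``$\Sha^1(k,E[8])\neq 0$'' into ``$\Sha(k,E)\not\subset 8\HH^1(k,E)$.''
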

	
	\begin{Theorem}[Proposition~\ref{prop:nonisoH0example}]\label{thm:H0example}
		There exists a non-isotrivial elliptic curve $E$ over $k = \F_2(t)$ with $j(E) \in k^8$ such that the local-global principle for divisibility by $2^n$ in $E(k)$ fails for every $n \ge 3$.
	\end{Theorem}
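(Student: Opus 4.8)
The plan is to reuse the non-isotrivial curve $E/k$ produced for Theorem~\ref{thm:H1example} (Proposition~\ref{prop:nonisoH1example}) and to transfer the failure from the $r=1$ setting to the $r=0$ setting by duality. First I would reformulate divisibility of points cohomologically. For $P \in E(k)$, local-global divisibility by $2^n$ fails at $P$ exactly when $P \in 2^nE(k_v)$ for every place $v$ while $P \notin 2^nE(k)$. Writing $\delta$ for the connecting map of the flat Kummer sequence
\[
0 \to E(k)/2^nE(k) \xrightarrow{\delta} \HH^1(k,E[2^n]) \to \HH^1(k,E)[2^n] \to 0
\]
and using that $\delta$ commutes with restriction to the completions, a short diagram chase identifies the group of such $P$ (modulo $2^nE(k)$) with $\ker\big(\Sha^1(k,E[2^n]) \to \Sha(k,E)\big)$, where $\Sha^1(k,E[2^n])$ is the kernel of localization on $\HH^1$ of the finite flat group scheme $E[2^n]$. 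So it suffices to produce a nonzero class there for each $n \ge 3$, and then to unwind it via $\delta$ into an actual point $P$.

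The key step is to recognize that Theorem~\ref{thm:H1example} already supplies the dual defect. Applying the connecting map of $0 \to E[2^n] \to E \xrightarrow{2^n} E \to 0$ in degree one, exactness shows that $\Sha(k,E) \not\subset 8\HH^1(k,E)$ is equivalent to the nonvanishing of $\ker\big(\Sha^2(k,E[2^n]) \to \HH^2(k,E)\big)$ for $n \ge 3$ (here one uses $2^n\HH^1(k,E) \subseteq 8\HH^1(k,E)$, so that the obstruction for $8$ propagates to all $n \ge 3$). I would then invoke the self-duality of $E[2^n]$ under the Weil pairing $E[2^n]\times E[2^n] \to \mu_{2^n}$ together with the Poitou--Tate-type duality for finite flat group schemes over the global function field $k$ established by Gonz\'alez-Avil\'es and Tan. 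This puts $\Sha^1(k,E[2^n])$ and $\Sha^2(k,E[2^n])$ in perfect duality and, crucially, identifies the two defect groups above as exact annihilators of one another. Hence nonvanishing of the $r=1$ defect forces nonvanishing of the $r=0$ defect for every $n \ge 3$.

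To package this as a single clean statement I would, if possible, produce one point $P \in E(k)$ that is divisible by $2^n$ in $E(k_v)$ for every $v$ and every $n$, but with $P \notin 8E(k)$; since $2^nE(k) \subseteq 8E(k)$ for $n \ge 3$, the single global obstruction $P \notin 8E(k)$ forces $P \notin 2^nE(k)$, while infinite local $2$-divisibility yields $P \in 2^nE(k_v)$ for all $n$. Verifying infinite local divisibility is exactly where the hypotheses $p=2$ and $j(E) \in k^8$ are used, as in Corollary~\ref{cor:jinv}: over a local field of characteristic $2$ the isogeny $[2]$ is inseparable on the formal group, and $j(E)$ being an eighth power is precisely the condition that permits a rational point to be infinitely $2$-divisible in every completion.

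The step I expect to be the main obstacle is making the flat-cohomology duality precise in positive characteristic: one must verify that the local conditions cutting out $\Sha^1$ and $\Sha^2$ are mutually annihilating under the local Tate pairing, that the Weil-pairing self-duality of $E[2^n]$ is compatible with the maps induced by $E[2^n]\hookrightarrow E$ and $E \xrightarrow{2^n} E$, and that the groups involved are finite so the pairing is perfect; only then does the $r=1$ defect translate into the $r=0$ defect. A secondary difficulty is the concrete local analysis needed to exhibit and then verify the point $P$, namely controlling $[2]$ on the formal groups and component groups of the N\'eron models at the finitely many bad places and at the place where the inseparability of $j$ is concentrated.
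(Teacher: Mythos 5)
There is a genuine gap, and it is fatal to the strategy rather than a fixable detail. Your cohomological reformulation is correct: the locally-but-not-globally divisible points modulo $2^nE(k)$ are identified with $\ker\bigl(\Sha^1(k,E[2^n]) \to \Sha(k,E)\bigr)$. But the duality step that is supposed to produce a nonzero element of this kernel does not work. If two subgroups are exact annihilators under a perfect pairing, nonvanishing of one does \emph{not} force nonvanishing of the other; in the relevant situation the two defects are in fact complementary. Here $\Sha^1(k,E[2^n]) \simeq \Z/2\Z$ (Theorem~\ref{thm:computeSha1}), so either its nontrivial element maps to a nonzero element of $\Sha(k,E)$ --- which is exactly what drives the $r=1$ failure via Theorem~\ref{thm:characterizedivisibility} --- or it maps to zero, in which case it comes from a point and gives the $r=0$ failure. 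These two possibilities are mutually exclusive for a given curve and $n$. Concretely, the curve of Proposition~\ref{prop:nonisoH1example} that you propose to reuse has analytic (hence algebraic) rank $0$, so $E(k)$ is torsion; by Lemma~\ref{lem:torsion} no torsion point can violate the local-global principle for divisibility, so for that curve the principle in $E(k)$ \emph{holds} for all $n$. This is precisely why the paper uses a second, rank-one curve for Theorem~\ref{thm:H0example}.

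The paper's actual proof takes $E: y^2+xy = x^3+t^8x^2+1/t^8$, computes $E(k) \simeq \Z \oplus \Z/2\Z$ with an explicit generator $P$ of infinite order, shows $\Sha^1(k,E[2^n]) \simeq \Z/2\Z$ for all $n \ge 3$ (the $j$-invariant $t^8$ is an $8$-th but not a $16$-th power, so $E[2^n](\ksep)=E[8](\ksep)$), and then uses a $2$-descent to get $\Sha(k,E)[2]=0$, which forces the order-$2$ class of $\Sha^1(k,E[2^n])$ to die in $\Sha(k,E)$ and hence to equal $\delta_{2^n}(2^{n-1}P)$. Note also that the witnesses are the points $2^{n-1}P$, varying with $n$; your proposed packaging via a single point that is locally divisible by $2^n$ for \emph{all} $n$ is both stronger than needed and unsupported --- the heuristic that $j(E) \in k^8$ ``permits infinite local $2$-divisibility'' is not correct ($j \in k^{8}$ governs the field of definition of the $8$-torsion via Katz--Mazur, not local divisibility of points). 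If you want to salvage your outline, the missing ingredients are: a curve of positive rank, and a computation showing that $\Sha^1(k,E[2^n])$ maps to zero in $\Sha(k,E)$ (e.g.\ by showing $\Sha(k,E)[2]=0$), after which Lemma~\ref{lem:torsion} pins down which point realizes the nontrivial class.
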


	In Propositions~\ref{prop:isoH1example} and~\ref{prop:isoH0example} we give examples of isotrivial elliptic curves over global fields of characteristic $2$ for which the local-global principle for divisibility by $8$ can fail. Interestingly, such examples do not occur over $\F_2(t)$ (see Proposition~\ref{prop:iso}).
	
	\begin{Remark}
		Although we have not pursued it here, it would also be interesting to see if there are analogues of~\eqref{ell5} and~\eqref{ellbig} above over function fields of curves over finite fields of positive characteristic prime to $\ell$ in terms of, say, genus or gonality. One could then also ask for examples along the lines of~\eqref{m=2}--\eqref{it:8}.
	\end{Remark}

	All of the results above establishing the local-global principle for divisibility by $m$ in $\HH^0(k,A)$ and $\HH^1(k,A)$ are obtained by showing that the group $\Sha^1(k,A[m])$ (or its dual) of locally trivial classes in $\HH^1(k,A[m])$ vanishes. Building on \cite{GA-THassePrinciple}, we give a necessary and sufficient criterion for the vanishing of $\Sha^1(k,A[m])$ in the case that $A$ is an elliptic curve and $m = p^n$ is a power of the characteristic of $k$ (see Theorem~\ref{thm:computeSha1}). This allows us to construct the examples in Theorems~\ref{thm:H1example} and~\ref{thm:H0example} demonstrating the failure of the local-global principle for divisibility in $\HH^r(k,A)$. To obtain the stronger result of Theorem~\ref{thm:H1example} that $\Sha(k,A) \not\subset 8\HH^1(k,A)$) we establish the following characterization of  the divisibility of $\Sha(k,A)$ in the Weil-Ch\^atelet group.
	
	\begin{Theorem}\label{thm:characterizedivisibility}
		Let $A$ be an abelian variety over a global field $k$ of characteristic $p$ with dual abelian variety $A^*$ and let $m = p^n$. An element $T \in \Sha(k,A)$ lies in $m\HH^1(k,A)$ if and only if the image of the map 
		\[
			\Sha^1(k,A^*[m]) \to \Sha(k,A^*)
		\]
		induced by the inclusion of group schemes $A^*[m] \subset A^*$ is orthogonal to $T$ with respect to the Cassels-Tate pairing. In particular, $\Sha(k,A) \subset m\HH^1(k,A)$ if and only if the image of $\Sha^1(k,A^*[m])$ lies in the divisible subgroup of $\Sha(k,A^*)$.	
	\end{Theorem}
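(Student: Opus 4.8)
The plan is to reinterpret divisibility of $T$ by $m$ as the vanishing of a single class in the flat cohomology group $\HH^2(k,A[m])$, to detect that vanishing through Poitou--Tate duality, and finally to match the resulting finite-level pairing with the Cassels--Tate pairing. All cohomology is taken to be fppf cohomology so that the argument is insensitive to whether $m$ is prime to $p$; this is what lets us treat $m=p^n$. Concretely, the multiplication-by-$m$ sequence $0\to A[m]\to A\xrightarrow{\,m\,}A\to 0$ yields a connecting map $\partial\colon\HH^1(k,A)\to\HH^2(k,A[m])$ whose kernel is exactly the image of multiplication by $m$, namely $m\HH^1(k,A)$. Hence $T\in m\HH^1(k,A)$ if and only if $\partial T=0$. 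Because $T\in\Sha(k,A)$ restricts to $0$ in each $\HH^1(k_v,A)$ and $\partial$ commutes with localization, $\partial T$ restricts to $0$ in every $\HH^2(k_v,A[m])$; thus $\partial T\in\Sha^2(k,A[m])$.

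\emph{Detecting the vanishing.} The Weil pairing identifies $A^*[m]$ with the Cartier dual of the finite flat group scheme $A[m]$, and the flat Poitou--Tate duality in positive characteristic (due to Gonz\'alez-Avil\'es and Tan \cite{GA-THassePrinciple}, extending Milne) provides a perfect pairing $\Sha^1(k,A^*[m])\times\Sha^2(k,A[m])\to\Q/\Z$. By nondegeneracy, $\partial T=0$ if and only if $\partial T$ is orthogonal to all of $\Sha^1(k,A^*[m])$ under this pairing.

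\emph{Matching the pairings (the crux).} The heart of the argument is to show that for every $\gamma\in\Sha^1(k,A^*[m])$, with image $c\in\Sha(k,A^*)$ under the map induced by $A^*[m]\subset A^*$, the Poitou--Tate pairing $\langle\gamma,\partial T\rangle$ coincides, up to a universal sign, with the Cassels--Tate pairing $\langle T,c\rangle$. This is exactly the ``connecting-map and Weil-pairing'' description of the Cassels--Tate pairing; the work is to verify that this description agrees with the Cassels--Tate pairing used in characteristic $p$, by tracking the cup products together with the local and global invariant maps through the flat-cohomology duality. I expect this compatibility to be the main obstacle, since it requires care with the flat-cohomology invariant maps and with signs. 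Once it is in place, the chain of equivalences $T\in m\HH^1(k,A)\iff\partial T=0\iff\langle\gamma,\partial T\rangle=0$ for all $\gamma\iff\langle T,c\rangle=0$ for all $\gamma$ shows that $T$ lies in $m\HH^1(k,A)$ precisely when it is orthogonal to the image of $\Sha^1(k,A^*[m])$ in $\Sha(k,A^*)$, which is the first assertion.

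\emph{The global consequence.} For the final statement, apply the criterion to every $T\in\Sha(k,A)$: one has $\Sha(k,A)\subset m\HH^1(k,A)$ if and only if the image of $\Sha^1(k,A^*[m])$ is orthogonal to all of $\Sha(k,A)$, i.e. lies in the right kernel of the Cassels--Tate pairing. Since that pairing is nondegenerate modulo maximal divisible subgroups, its right kernel equals the maximal divisible subgroup of $\Sha(k,A^*)$, giving exactly the stated equivalence.
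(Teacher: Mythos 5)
Your reduction is the same as the paper's: $T\in m\HH^1(k,A)$ iff $\partial T=0$ in $\HH^2(k,A[m])$, the class $\partial T$ lies in $\Sha^2(k,A[m])$, and the perfect pairing $\Sha^1(k,A^*[m])\times\Sha^2(k,A[m])\to\Q/\Z$ (this is Gonz\'alez-Avil\'es, \emph{Arithmetic duality theorems for 1-motives over function fields}, Theorem 1.1, not the Gonz\'alez-Avil\'es--Tan paper) detects its vanishing; the last step via nondegeneracy of the Cassels--Tate pairing modulo divisible subgroups is also as in the paper. So the skeleton is right.

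The genuine gap is the step you yourself flag as the crux and then do not carry out: the identity $\langle\gamma,\partial T\rangle=\pm\langle c,T\rangle$ relating the finite-level duality pairing to the Cassels--Tate pairing. You propose to get it from ``the connecting-map and Weil-pairing description of the Cassels--Tate pairing,'' but that description (Poonen--Stoll) is formulated in Galois cohomology and is precisely what is \emph{not} available for $m=p^n$ in characteristic $p$, where $A[p^n]$ is a non-smooth finite flat group scheme and all the cohomology is fppf. The paper explicitly notes that the Weil-pairing argument is the one used in the prime-to-$p$ case and that a different argument is needed here. Its actual proof of the compatibility spreads the situation out over a sufficiently small open affine $U$ of the curve with function field $k$, and chases a commutative ladder of four pairings from Gonz\'alez-Avil\'es' duality paper --- the pairings on $D^i(U,\calA^*)$ and $\HH^i(U,\calA^*)\times\HH^{2-i}_c(U,\calA)$ against those on $\HH^i(U,\calA^*[p^n])\times\HH^{3-i}_c(U,\calA[p^n])$ and $D^i(U,\calA^*[p^n])$ --- using the maps $\theta$ and $\partial_c$ and the identifications $D^1(U,\calA^*[p^n])=\Sha^1(k,A^*[p^n])$, $D^1(U,\calA)(p)=\Sha(k,A)(p)$ for $U$ small, together with the fact that these pairings compute the Cassels--Tate and finite-level dualities. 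Without some such argument (or an established fppf analogue of the Weil-pairing description), your chain of equivalences is not closed, and this compatibility is the entire content of the proof beyond the formal reduction.
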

	
		The proof of this theorem is given in Section~\ref{sec:ProofOfTheorem3}. The theorem also holds when $m$ is not divisible by the characteristic; see \cite[Theorem 4]{CreutzWCDiv} where a proof using the ``Weil pairing definition" of the Cassels-Tate pairing from \cite{PoonenStoll} is given. To handle the case that $m = p^n$ we make use of duality theorems in flat cohomology developed in \cite[Chapter 3]{MilneADT} and \cite{G-Aduality}.

	\section{Locally trivial torsors and divisibility}

	The orthogonality condition in Theorem~\ref{thm:characterizedivisibility} holds trivially if $\Sha^1(k,A^*[m]) = 0$. When this is the case more is true.
	
		\begin{Lemma}\label{lem:1}
			Maintain the notation from Theorem~\ref{thm:characterizedivisibility}. Assume any of the following:
			\begin{enumerate}
				\item $r = 0$ and $\Sha^1(k,A[m]) = 0$,
				\item $r = 1$ and $\Sha^1(k,A^*[m]) = 0$, or
				\item $r \ge 2$.
			\end{enumerate}
			Then the local-global principle for divisibility by $m$ holds in $\HH^r(k,A)$.
		\end{Lemma}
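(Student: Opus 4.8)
The plan is to reduce all three cases, via the Kummer sequence, to the vanishing of a single group of locally trivial classes. Since $m = p^n$ and $k$ has characteristic $p$, the finite subgroup scheme $A[m]$ is not \'etale, so I would work throughout with flat (fppf) cohomology; this does not affect $\HH^r(k,A)$ or $\HH^r(k_v,A)$ because $A$ is smooth, but it is essential because the sequence $0 \to A[m] \to A \xrightarrow{m} A \to 0$ is exact only on the flat site. Taking flat cohomology and extracting the connecting maps gives, for every $r$, a short exact sequence
\[
0 \to \HH^r(k,A)/m\HH^r(k,A) \xrightarrow{\;\delta\;} \HH^{r+1}(k,A[m]) \to \HH^{r+1}(k,A)[m] \to 0,
\]
together with its analogue over each completion $k_v$, and these fit into a commutative ladder under the localization maps.

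Next I would perform the standard diagram chase. Suppose $T \in \HH^r(k,A)/m\HH^r(k,A)$ lies in the kernel of the localization map to $\prod_v \HH^r(k_v,A)/m\HH^r(k_v,A)$. Pushing $T$ into $\HH^{r+1}(k,A[m])$ via $\delta$ and then localizing gives $0$ in every $\HH^{r+1}(k_v,A[m])$, by commutativity of the ladder and the fact that $T$ localizes to $0$; hence $\delta(T) \in \Sha^{r+1}(k,A[m])$. Because $\delta$ is injective, the local-global principle for divisibility by $m$ in $\HH^r(k,A)$ follows as soon as one knows $\Sha^{r+1}(k,A[m]) = 0$, so it remains to verify this vanishing in each case.

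In case (1), $r = 0$ and $\Sha^1(k,A[m]) = 0$ is precisely the hypothesis. In case (2), $r = 1$, and I would use the Weil pairing to identify the Cartier dual $A[m]^D$ with $A^*[m]$, so that the flat duality for finite group schemes over the global field $k$ (\cite[Chapter 3]{MilneADT}, \cite{G-Aduality}) furnishes a perfect pairing $\Sha^1(k,A^*[m]) \times \Sha^2(k,A[m]) \to \Q/\Z$; thus $\Sha^2(k,A[m]) \cong \Sha^1(k,A^*[m])^\vee = 0$ by hypothesis. In case (3), $r \ge 2$, so $\Sha^{r+1}(k,A[m])$ lies in cohomological degree $\ge 3$; the completions are non-archimedean of flat cohomological dimension $2$, and the global duality theory of \cite[Chapter 3]{MilneADT} and \cite{G-Aduality} forces $\Sha^s(k,A[m]) = 0$ for all $s \ge 3$, so $\Sha^{r+1}(k,A[m]) = 0$ unconditionally, which is the positive-characteristic analogue of \cite[Theorem 2.1]{CreutzWCDiv2}.

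The Kummer sequence and the diagram chase are routine and carry over verbatim from the characteristic-zero setting; the only genuinely characteristic-$p$ input is the flat duality used in case (2) and the high-degree vanishing used in case (3). The point requiring care is that all duality and dimension statements must be applied in flat rather than \'etale cohomology, and that the identification $A[m]^D \cong A^*[m]$ via the Weil pairing persists even though $\mu_m$ is infinitesimal; these are precisely where the cited duality theorems \cite[Chapter 3]{MilneADT} and \cite{G-Aduality} enter.
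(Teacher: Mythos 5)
Your treatment of cases (1) and (2) matches the paper's proof: the Kummer sequence on the flat site, the observation that a locally-but-not-globally divisible class produces a nonzero element of $\Sha^{r+1}(k,A[m])$, and then the hypothesis in case (1) and the perfect pairing $\Sha^1(k,A^*[p^n]) \times \Sha^2(k,A[p^n]) \to \Q_p/\Z_p$ of Gonz\'alez-Avil\'es in case (2). That part is fine.

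Case (3) is where you diverge from the paper, and where your argument has a gap. You reduce to the claim that $\Sha^{s}(k,A[m]) = 0$ for all $s \ge 3$ and assert that ``the global duality theory forces'' this. But the duality theorems you cite are statements about $\Sha^1 \times \Sha^2$ (and, in Milne's formulation, about the groups $D^i$ for $i \le 3$ with $D^3$ paired against $D^0$ of the Cartier dual); none of them directly asserts the vanishing of $\Sha^s(k,A[m])$ in degree $\ge 3$, and since $A[m]$ is non-smooth you cannot invoke Galois-cohomological dimension for it. The vanishing is in fact true, but establishing it requires a separate computation (e.g.\ sandwiching $\HH^3(k,A[m])$ between $\HH^2(k,A)/m$ and $\HH^3(k,A)[m]$ via the Kummer sequence and then killing both ends), and that computation uses exactly the inputs the paper uses for a more direct argument: the paper shows $\HH^r(k,A)(p) = 0$ for all $r \ge 2$ --- for $r = 2$ by \cite[Lemma 3.3]{GA-THassePrinciple}, and for $r \ge 3$ because $A$ is smooth, so $\HH^r(k,A) = \HH^r(\Gal(k),A(\ksep))$, and $k$ has strict $p$-cohomological dimension $2$ --- whence $\HH^r(k,A)/m\HH^r(k,A) = 0$ and the local-global principle holds vacuously, with no need to touch $\Sha^{r+1}(k,A[m])$ at all. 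You should either adopt that route or supply an actual proof of the degree-$\ge 3$ vanishing you are asserting.
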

		
		\begin{Remark}
			Like Theorem~\ref{thm:characterizedivisibility}, this is also true when $m$ is not divisible by the characteristic of $k$ \cite[Theorem 2.1]{CreutzWCDiv2}; the proofs of statements (1) and (2) given below are valid without assumption on the characteristic.
			\end{Remark}
		
		\begin{proof}
			Recall that $m = p^n$ is a power of the characteristic of $k$. The sequence
			\begin{equation}\label{eq:KummerSeq}
				0 \to A[p^n] \to A \stackrel{p^n}\to A \to 0
			\end{equation}	
			is exact on the flat site and gives rise to an exact sequence in flat cohomology,
			\begin{equation*}\label{eq:KummerSeqBoundaries}
				\HH^r(k,A) \stackrel{(p^n)_*} \to \HH^r(k,A) \to \HH^{r+1}(k,A[p^n])\,.
			\end{equation*}	
			From this we see that the obstruction to an element of $\HH^r(k,A)$ being divisible by $p^n$ is a class in $\HH^{r+1}(k,A[p^n])$. Therefore an element of $\HH^r(k,A)$ that is locally, but not globally, divisible by $p^n$ must give a nontrivial element of $\Sha^{r+1}(k,A[p^n])$. In case (1) the conclusion follows immediately from this observation.
			
			In case (2) the conclusion follows from this observation once one takes into account that there is a perfect pairing
			\[
				\Sha^1(k,A^*[p^n]) \times \Sha^2(k,A[p^n]) \to \Q_p/\Z_p\,
			\]
			\cite[Theorem 1.1]{G-Aduality}.
			
			 In case (3) the statement holds trivially because $\HH^r(k,A)(p) = 0$. Indeed for $r = 2$ this is \cite[Lemma 3.3]{GA-THassePrinciple} and for $r \ge 3$ this follows from the fact that $k$ has strict $p$-cohomological dimension $2$ (\cite[Prop. 6.1.9 and Prop. 6.1.2]{GilleSzamuely}) and the fact that $\HH^r(k,A) = \HH^r(\Gal(k),A(\ksep))$ since $A$ is smooth.
		\end{proof}

	In light of Theorem~\ref{thm:characterizedivisibility} and Lemma~\ref{lem:1} we are interested in determining the group $\Sha^1(k,A[m])$. Using results of \cite{GA-THassePrinciple} and a well known argument using Chebotarev's density theorem (e.g., \cite[Proposition 8]{Serre1964}) this may be reduced to a computation in the cohomology of finite groups. In the case that $A[m](\ksep)$ is cyclic, the required computation is a classical one used in the proof of the Grunwald-Wang Theorem. In this way, we obtain the following theorem.
	
	\begin{Theorem}\label{thm:computeSha1}
		Suppose $A$ is an abelian variety over a global field $k$ of characteristic $p$ with separable closure $\ksep$ such that $A[p^n](\ksep)$ is cyclic (this is the case, for example, if $A$ is an elliptic curve). Set $K = k(A[p^n](\ksep))$ and $G = \Gal(K/k)$. Then $\Sha^1(k,A[p^n]) = 0$, unless $G$ is noncyclic and not isomorphic to any of its decomposition groups, in which case $\Sha^1(k,A[p^n]) = \Z/2\Z$.
	\end{Theorem}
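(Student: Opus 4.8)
The plan is to pass from the flat cohomology of the group scheme $A[p^n]$ to the Galois cohomology of a finite module, and then to a computation inside the finite group $G$. Write $M = A[p^n](\ksep)$, a cyclic group of order $p^m$ on which $\Gal(\ksep/k)$ acts through the faithful character $G = \Gal(K/k) \hookrightarrow \Aut(M) = (\Z/p^m\Z)^\times$. The first step is to invoke the comparison and duality results of \cite{GA-THassePrinciple}: filtering $A[p^n]$ by its connected-\'etale sequence, the connected part contributes no locally trivial classes, so that $\Sha^1(k, A[p^n]) \cong \Sha^1(k, M)$, the latter being the ordinary (\'etale) Tate--Shafarevich group of the Galois module $M$.

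Next I would run the Chebotarev reduction, as in \cite[Proposition 8]{Serre1964}. Restricting a class of $\Sha^1(k,M)$ to $\Gal(\ksep/K)$ lands in $\Sha^1(K, M) = \Sha^1(K,\Z/p^m\Z)$, which vanishes because a cyclic $p$-extension of $K$ splitting at every place is trivial; hence every locally trivial class is inflated from $\HH^1(G, M)$. The inflation--restriction sequence at each place $v$ (using that $\HH^1(G_v, M) \hookrightarrow \HH^1(k_v, M)$ is injective, where $G_v \le G$ denotes the decomposition group) then identifies
\[
	\Sha^1(k, M) \;\cong\; \ker\Bigl(\HH^1(G, M) \to \prod_v \HH^1(G_v, M)\Bigr)\,.
\]
Since every cyclic subgroup of $G$ arises as some $G_v$ for infinitely many unramified $v$, this kernel is contained in $\Sha^1_\omega(G,M) := \ker\bigl(\HH^1(G,M)\to\prod_{C}\HH^1(C,M)\bigr)$, the intersection of the kernels of restriction to all cyclic $C \le G$; the only extra conditions come from the finitely many ramified places, where $G_v$ may be noncyclic.

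The heart of the matter is the finite-group computation, which is classical from the proof of the Grunwald--Wang theorem. If $G$ is cyclic then $G$ is itself one of the $C$, so $\Sha^1_\omega(G,M) = 0$; this settles every odd $p$, since then $(\Z/p^m\Z)^\times$ and hence $G$ is cyclic. When $p = 2$ and $G$ is noncyclic, $G$ necessarily contains the full $2$-torsion $V \cong (\Z/2\Z)^2$ of $(\Z/2^m\Z)^\times$ (which forces $m \ge 3$), and a direct cocycle computation gives $\Sha^1_\omega(G,M) \cong \Z/2\Z$. The essential refinement I would then prove is that a generator $\gamma$ restricts to $0$ in $\HH^1(H,M)$ for \emph{every} proper subgroup $H \lneq G$: for cyclic $H$ this is the definition of $\Sha^1_\omega$, and for a noncyclic $H \lneq G$ one represents $\gamma$ by an explicit cocycle (e.g.\ one concentrated on the involution $-1 \in G$) and exhibits a coboundary trivializing its restriction. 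The corestriction relation $\Cor^G_H\circ\Res^G_H = [G:H]$ shows only that $\Res^G_H\gamma$ lies in the kernel of corestriction, not that it vanishes; the exact vanishing is precisely where the structure of $M$ as a cyclic $2$-group is used.

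Granting this refinement the conclusion is immediate: for a place $v$ the class $\gamma$ restricts to $0$ in $\HH^1(G_v,M)$ whenever $G_v \lneq G$, while $\Res^G_{G_v}\gamma = \gamma \ne 0$ when $G_v = G$. As $G_v \le G$, an abstract isomorphism $G_v \cong G$ forces $|G_v| = |G|$ and hence $G_v = G$, so the surviving obstruction is exactly the existence of a place whose decomposition group is all of $G$. Therefore $\Sha^1(k,M)$ equals $\Sha^1_\omega(G,M) = \Z/2\Z$ when $G$ is noncyclic and isomorphic to none of its decomposition groups, and vanishes in every other case. I expect the main obstacle to be this last step of the group computation---showing that the special class dies under restriction to every proper, possibly noncyclic, subgroup---since it is this fact, rather than the mere size of $\Sha^1_\omega(G,M)$, that yields the clean decomposition-group criterion.
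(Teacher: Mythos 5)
Your proposal is correct and follows essentially the same route as the paper: reduce $\Sha^1(k,A[p^n])$ to $\Sha^1(\Gal(k),M)$ via the Main Theorem of Gonz\'alez-Avil\'es--Tan, use inflation--restriction and Chebotarev to identify this with $\ker\bigl(\HH^1(G,M)\to\prod_v\HH^1(G_v,M)\bigr)$, and then conclude from the finite-group facts that $\HH^1(G,M)$ is $0$ or $\Z/2\Z$ and that the nontrivial class restricts to zero on every proper subgroup (the paper's Lemma~\ref{lem:grcohom}). The only difference is presentational: where you sketch the proper-subgroup vanishing by an explicit cocycle concentrated at $-1$, the paper derives it from the cohomology of a short exact sequence of $\mu_2$-modules $0 \to M^{\langle\alpha\rangle} \to M^{\langle\alpha^k\rangle} \to Q \to 0$; you correctly identify this vanishing (and not merely the size of $\HH^1(G,M)$) as the crux.
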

	
	 \begin{Remark}\label{rem:oneplace}
		When $E/k$ is an ordinary elliptic curve with $j(E) \notin k^p$, a stronger statement is true: the restriction map $\HH^1(k,E[p^n]) \to \HH^1(k_v,E[p^n])$ is injective for any prime $v$ of $k$. In the case $n = 1$ this follows from the existence of an injective map $\HH^1(k,E[p]) \hookrightarrow k$ (functorial in $k$) (see \cite{Ulmer_pdesc,Voloch_pdesc}). The general case is proved by the following induction argument suggested to us by D. Ulmer. The assumption $j(E) \notin k^p$ implies that $\HH^0(k,E[p^{n-1}]) = 0$, so from the flat cohomology of the exact sequence $0 \to E[p] \to E[p^n] \to E[p^{n-1}] \to 0\,$ we obtain a commutative diagram with exact rows,
			\[
				\xymatrix{
					0 \ar[r]& \HH^1(k,E[p]) \ar[r]\ar@{^{(}->}[d] &\HH^1(k,E[p^n]) \ar[r]\ar[d] &\HH^1(k,E[p^{n-1}])\ar[d] \\
					0 \ar[r]& \HH^1(k_v,E[p]) \ar[r]&\HH^1(k_v,E[p^n]) \ar[r] &\HH^1(k_v,E[p^{n-1}])
					}
			\]
			The vertical map on the right is injective by the induction hypothesis, so the vertical map in the middle must be as well.
	\end{Remark}
	

	\begin{proof}[Proof of Theorem~\ref{thm:computeSha1}]
		To ease notation, set $M = A[p^n]$. Then $K = k(M(\ksep))$ and $G = \Gal(K/k)$. By the Main Theorem of \cite{GA-THassePrinciple} (or \cite[Example C.4.3]{CGP}), $\Sha^1(k,M) \simeq \Sha^1(\Gal(k),M(\ksep))\,.$ The inflation map gives an exact sequence
		\[
			0 \to \HH^1(G,M(\ksep)) \to \HH^1(\Gal(k),M(\ksep)) \to \HH^1(\Gal(K),M(\ksep))\,.
		\]	
		The action of $\Gal(K)$ on $M(\ksep)$ is trivial, so $\Sha^1(\Gal(K),M(\ksep)) = 0$ by Chebotarev's theorem. It follows that $\Sha^1(\Gal(k),M(\ksep))$ is isomorphic to
		\[
			\Sha^1(G,M) := \ker\left(\HH^1(G,M(\ksep)) \stackrel{\res_v}{\To} \prod_v \HH^1(G_v,M(\ksep))\right)\,,
		\]
		where $G_v \subset G$ denotes the decomposition group at the prime $v$ and the product runs over all primes. Since every cyclic subgroup of $G$ occurs as a decomposition group, Theorem \ref{thm:computeSha1} follows from the next lemma.
	\end{proof}
	
		\begin{Lemma}\label{lem:grcohom}
			Let $G \subset (\Z/p^N\Z)^\times = \Aut(\Z/p^N\Z)$ and let $M$ be the $G$-module isomorphic to $\Z/p^N\Z$ on which $G$ acts in the canonical way.
			\begin{enumerate}
				\item\label{it:1} If $G$ is not cyclic, then $p = 2$, $N \ge 3$ and $-1 \in G$.
				\item\label{it:2} $\HH^1(G,M) = 0$ unless $p = 2$, $N \ge 3$ and $-1 \in G$, in which case $\HH^1(G,M) \simeq \Z/2\Z\,.$
				\item\label{it:3} If $G' \subsetneq G$ is a proper subgroup, then the restriction map $\HH^1(G,M) \to \HH^1(G',M)$ is the zero map.
			\end{enumerate}
		\end{Lemma}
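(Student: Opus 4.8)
The plan is to reduce the whole statement to the structure of $(\Z/p^N\Z)^\times$ together with a Tate-cohomology computation for cyclic groups, and then bootstrap to the non-cyclic case via inflation-restriction. For part~(1) I would recall that $(\Z/p^N\Z)^\times$ is cyclic whenever $p$ is odd or $p=2,\ N\le 2$, while for $p=2,\ N\ge 3$ it is $\langle -1\rangle\times\langle 5\rangle\cong\Z/2\Z\times\Z/2^{N-2}\Z$. Since subgroups of cyclic groups are cyclic, a non-cyclic $G$ forces $p=2$ and $N\ge 3$; in that case the $2$-torsion of the ambient group is exactly $(\Z/2\Z)^2$, so any subgroup of $2$-rank at least $2$ must contain all of it, and in particular the unique order-two element $-1$ of the first factor.

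For part~(2) I would first treat cyclic $G=\langle u\rangle$ of order $d$, where $\HH^1(G,M)\cong\ker(N_G)/(u-1)M$ with $N_G$ acting as multiplication by $s=1+u+\cdots+u^{d-1}$ (Tate cohomology of a cyclic group). Computing the orders of these subgroups of $M=\Z/p^N\Z$ reduces $\lvert\HH^1(G,M)\rvert$ to the $p$-adic valuations (capped at $N$) of $u-1$ and of $s$, and a lifting-the-exponent argument shows the result is trivial for every $u$ except $u=-1$, where $s$ acquires an extra power of $p$ — an overshoot in the valuation that can occur only for $p=2$. Equivalently, since $-1$ is not a square modulo $p^N$, one gets $\HH^1(G,M)\ne 0$ exactly when $-1\in\langle u\rangle$, and then it is $\Z/2\Z$. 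For non-cyclic $G$ (so $p=2$, $N\ge 3$, $-1\in G$ by part~(1)) I would write $G=\langle -1\rangle\times C$ with $C=G\cap\langle 5\rangle=\langle w\rangle$ and $w\equiv 1\pmod 4$. The cyclic case gives $\HH^1(C,M)=0$, so inflation-restriction along $1\to C\to G\to\langle -1\rangle\to 1$ yields $\HH^1(G,M)\cong\HH^1(\langle -1\rangle,M^{C})$; here $M^{C}\cong\Z/2^{v_2(w-1)}\Z$ carries the sign action, so this group is $\Z/2\Z$.

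For part~(3), by transitivity of restriction it suffices to show that the restriction to every maximal (index-two) subgroup $G'\subsetneq G$ vanishes. If $-1\notin G'$, then $G'$ is cyclic — a non-cyclic subgroup would contain $-1$ by part~(1) — so $\HH^1(G',M)=0$ by part~(2) and the restriction is automatically zero. If $-1\in G'$, then $G'=\langle -1\rangle\times C'$ with $C'=G'\cap C\subsetneq C$ a proper subgroup, and I would track the generator through the inflation identification of part~(2): the restriction becomes the map $\HH^1(\langle -1\rangle,M^{C})\to\HH^1(\langle -1\rangle,M^{C'})$ induced by the inclusion $M^{C}\hookrightarrow M^{C'}$. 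Because $C'\subsetneq C$ is a proper subgroup of a cyclic $2$-group, the fixed module strictly grows and in fact $M^{C}\subseteq 2\,M^{C'}$; since $\HH^1(\langle -1\rangle,A)=A/2A$ for a sign module $A$, a map induced by the inclusion into $2\,M^{C'}$ is zero.

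The step I expect to be the main obstacle is part~(3) in the case $-1\in G'$, together with the closely related non-cyclic computation in part~(2): here the source and target of the restriction are both $\Z/2\Z$, so one cannot conclude by counting and must instead pin down the nonzero class itself. The resolution is to push it through the inflation isomorphism onto the sign representation on the fixed submodule $M^{C}$, and to observe that passing from $C$ to a proper subgroup $C'$ enlarges the inclusion $M^{C}\hookrightarrow M^{C'}$ by a factor of $2$, which annihilates the class modulo $2$.
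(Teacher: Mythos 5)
Your proof is correct, and for the substantive part~\eqref{it:3} it follows essentially the same route as the paper: both arguments use the vanishing of $\HH^1(C,M)$ for $C = G\cap\langle 5\rangle$ to identify $\HH^1(G,M)$ with $\HH^1(\mu_2,M^{C})$ via inflation, and then identify the restriction to $G'=\mu_2\times C'$ with the map $\HH^1(\mu_2,M^{C})\to\HH^1(\mu_2,M^{C'})$ induced by the inclusion $i\colon M^{C}\hookrightarrow M^{C'}$. The only difference is the last step: the paper fits $i_*$ into the exact sequence $0\to Q^{\mu_2}\to\HH^1(\mu_2,M^{C})\to\HH^1(\mu_2,M^{C'})$ with $Q=M^{C'}/M^{C}$ and concludes by noting the first two terms both have order $2$, whereas you compute $\HH^1(\mu_2,A)=A/2A$ for these sign modules and verify directly (via lifting the exponent) that $i(M^{C})\subseteq 2M^{C'}$; the two arguments carry the same content, yours exhibiting the vanishing explicitly rather than by counting orders. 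You also write out parts~\eqref{it:1} and~\eqref{it:2}, which the paper delegates to \cite{CNF}; your arguments there are fine. One point you should make explicit rather than pass over: your (correct) cyclic computation gives $\HH^1(\langle u\rangle,M)\neq 0$ exactly when $-1\in\langle u\rangle$ and $-1\neq 1$, and for $p=2$, $N=2$, $G=(\Z/4)^\times$ this yields $\HH^1(G,M)\simeq\Z/2\Z$, so the clause ``$N\ge 3$'' in part~\eqref{it:2} as stated should read ``$N\ge 2$''; this is harmless for the paper (part~\eqref{it:2} is only ever applied after part~\eqref{it:1} has forced $N\ge 3$), but as written your computation silently disagrees with the statement you are proving at $N=2$, and you should say so.
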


		\begin{proof}
			\eqref{it:1} is easy and \eqref{it:2} is a well known computation (c.f. \cite[Lemma 9.1.4]{CNF}). To prove \eqref{it:3} we may assume that $-1 \in G'$. Then we can write $G = \mu_2 \times \langle \alpha \rangle$, and $G' = \mu_2 \times \langle \alpha^{k} \rangle$, where $k = [G:G'] \ge 2$ and $\alpha \in 1 + 2^su$ with $s \ge 2$ and $u$ odd. 			
			 Consider cohomology of the short exact of $\mu_2$-modules,
			\[
				0 \to M^{\langle \alpha \rangle} \stackrel{i}\to M^{\langle \alpha^{k}\rangle} \to Q \to 0,
			\]
			where $Q$ is the quotient. Since $M^{G} = M^{G'} = M^{\mu_2}$, this gives an exact sequence
			\[
				0 \to Q^{\mu_2} \hookrightarrow \HH^1(\mu_2,M^{\langle \alpha \rangle}) \stackrel{i_*}\to \HH^1(\mu_2,M^{\langle\alpha^{k}\rangle})
			\]
			We claim that all terms in this sequence have order $2$, so $i_* = 0$. Indeed, $Q^{\mu_2} \simeq \Z/2\Z$, as $Q$ is cyclic and contains an element of order $2$ (since $2 \mid k$). The proof of \eqref{it:2} shows that the inflation map gives an isomorphism $\Z/2\Z \simeq \HH^1(\mu_2,M^{\langle \alpha\rangle}) \simeq \HH^1(G,M)$, and similarly for $G'$. Under these isomorphisms, the restriction map $\HH^1(G,M) \to \HH^1(G',M)$ is given by $i_*$ which is the zero map. 
		\end{proof}
		
		\begin{Corollary}\label{cor:jinv}
			Suppose $E$ is an elliptic curve over a global field $k$ of characteristic $p$. Then the local-global principle for divisibility by $p^n$ holds in $\HH^r(k,E)$ except possibly if $r \in \{0,1\}$, $p = 2$, $n \ge 3$, $E$ is a non-constant ordinary elliptic curve and the $j$-invariant of $E$ is an $8$-th power. 
		\end{Corollary}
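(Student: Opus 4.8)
The plan is to combine the vanishing criterion of Theorem~\ref{thm:computeSha1} with an explicit analysis of the $2$-power torsion of an ordinary elliptic curve in characteristic~$2$, the goal being to show that $\Sha^1(k,E[p^n]) \ne 0$ forces \emph{every} one of the listed conditions. First I would handle $r \ge 2$ directly with Lemma~\ref{lem:1}(3). Since an elliptic curve is canonically self-dual, $E^* \cong E$, so for $r \in \{0,1\}$ parts~(1) and~(2) of Lemma~\ref{lem:1} reduce the statement to the single claim that the principle holds whenever $\Sha^1(k,E[p^n]) = 0$. Writing $M(\ksep) = E[p^n](\ksep)$, which is cyclic of some order $p^m$ with $m \le n$, and setting $K = k(M(\ksep))$ and $G = \Gal(K/k) \hookrightarrow (\Z/p^m\Z)^\times$, Theorem~\ref{thm:computeSha1} shows that $\Sha^1(k,E[p^n]) \ne 0$ can occur only when $G$ is noncyclic; Lemma~\ref{lem:grcohom}(1) then forces $p = 2$, $m \ge 3$ and $-1 \in G$, and in particular $n \ge m \ge 3$.

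Next I would eliminate the supersingular and constant cases. If $E$ is supersingular then $E[p^n](\ksep) = 0$, so $G$ is trivial and $\Sha^1 = 0$; hence $E$ must be ordinary, and $E[2^n](\ksep) \cong \Z/2^m\Z$ with $m \ge 3$. If $E$ is constant, say defined over the exact field of constants $\F_q$ of $k$, then all of its torsion is algebraic over $\F_q$, so $K/k$ is a constant field extension and $G$ is a finite quotient of $\Gal(\Fbar_q/\F_q) \cong \hat\Z$, hence cyclic; again $\Sha^1 = 0$ by Theorem~\ref{thm:computeSha1}. Thus a counterexample must be non-constant and ordinary.

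The remaining and most substantial step is to prove that $m \ge 3$ forces $j(E) \in k^8$; concretely, that a $\ksep$-rational point of order $8$ can exist only if $j(E)$ is an $8$th power. Putting an ordinary curve in the form $y^2 + xy = x^3 + a_2 x^2 + a_6$ with $j = a_6^{-1}$, the duplication formula becomes $x([2]P) = x_P^2 + a_6\,x_P^{-2}$, which is independent of $a_2$. Starting from the unique $2$-torsion point $(0,a_6^{1/2})$ and solving $2P' = P$ repeatedly, I would show by induction that a point of order $2^\ell$ has separable $x$-coordinate precisely when $a_6^{1/2^\ell} \in \ksep$; since $a_6 \in k$, its $2^\ell$th root generates a purely inseparable extension and so lies in $\ksep$ if and only if it already lies in $k$, which is equivalent to $j = a_6^{-1} \in k^{2^\ell}$. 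Once the $x$-coordinate lies in $\ksep$ the $y$-coordinate is recovered from a separable Artin-Schreier equation, so the whole point is defined over $\ksep$. Taking $\ell = 3$ yields exactly $j(E) \in k^8$.

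The main obstacle is this last computation, where the imperfection of $\ksep$ is essential: it is precisely what allows the group $E[2^n](\ksep)$ of separable torsion to be strictly smaller than the full geometric $2^n$-torsion, and what ties the order of the available torsion to the divisibility of $j$ by powers of $2$. I would therefore take care that the inductive halving argument is robust against the choice of model (in particular the twist parameter $a_2$, which affects only the separable recovery of the $y$-coordinate and not the defining equation for the $x$-coordinate), so that the conclusion depends on $E$ through $j(E)$ alone.
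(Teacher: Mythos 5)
Your proposal is correct and its skeleton coincides with the paper's: reduce $r \ge 2$ to Lemma~\ref{lem:1}(3), use self-duality and Lemma~\ref{lem:1}(1)--(2) to reduce $r\in\{0,1\}$ to the nonvanishing of $\Sha^1(k,E[p^n])$, then invoke Theorem~\ref{thm:computeSha1} and Lemma~\ref{lem:grcohom}(1) to force $p=2$ and $N\ge 3$, and dispose of the supersingular and constant cases exactly as the paper does. The one place you genuinely diverge is the implication ``$E[2^n](\ksep)$ contains a point of order $8$ $\Rightarrow$ $j(E)\in k^8$'': the paper simply cites Katz--Mazur, Proposition 12.2.7, whereas you reprove it by iterating the duplication formula $x(2P)=x_P^2+a_6x_P^{-2}$ on the model $y^2+xy=x^3+a_2x^2+a_6$ with $j=a_6^{-1}$. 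Your computation is sound and is essentially what the paper carries out by hand in Lemma~\ref{lem:8tors} for its specific family, so the elementary route is self-contained and makes visible exactly where the inseparability enters; the citation is shorter and model-independent by fiat. One small correction to your inductive claim: for $\ell=1$ the $x$-coordinate of the $2$-torsion point is $0\in k$, and the obstruction to separability sits entirely in the $y$-coordinate, since at $x=0$ the defining equation degenerates to the inseparable equation $y^2=a_6$ rather than an Artin--Schreier equation; so the statement ``the point of order $2^\ell$ has separable $x$-coordinate iff $a_6^{1/2^\ell}\in\ksep$'' should be phrased for the whole point (or started at $\ell\ge 2$, where the $x$-coordinate is $a_6^{1/2^\ell}$ times a separable quantity). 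This does not affect the case $\ell=3$ you actually need.
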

		
		\begin{proof}
			By Lemma~\ref{lem:1}, the local-global principle for divisibility by $p^n$ can only fail for $r \in \{0,1\}$ and then only when $\Sha^1(k,E[p^n]) \ne 0$. Let $K = k(E[p^n](\ksep))$, $G = \Gal(K/k)$, and $N$ be such that $E[p^n](\ksep) \simeq \Z/p^N\Z$. Then $N \le n$ and $j(E) \in k^{p^N}$ by \cite{KatzMazur} Proposition 12.2.7. If $\Sha^1(k,E[p^n]) \ne 0$, then $G$ is not cyclic, by Theorem~\ref{thm:computeSha1}, and $p = 2$ and $3 \le N$, by Lemma~\ref{lem:grcohom}. This implies that $E$ is ordinary, since otherwise $N = 0$. Finally, if $E$ is constant, then the $p^n$-torsion points are defined over a finite field and $G$ is cyclic, a contradiction. 
		\end{proof}
		
		For supersingular elliptic curves we have the following.
		\begin{Lemma}
			Suppose $E$ is a supersingular elliptic curve over a global field $k$ of characteristic $p$. Then $\HH^1(k,E)$ is $p$-divisible.
		\end{Lemma}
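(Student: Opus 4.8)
The plan is to reduce the claim to the vanishing of $\HH^2(k,E[p^n])$ for every $n$. As in the proof of Lemma~\ref{lem:1}, the multiplication-by-$p^n$ sequence~\eqref{eq:KummerSeq} yields an injection $\HH^1(k,E)/p^n\HH^1(k,E) \hookrightarrow \HH^2(k,E[p^n])$. Hence, once the target is known to vanish for all $n$, we get $\HH^1(k,E) = p^n\HH^1(k,E)$ for every $n$, and in particular $\HH^1(k,E)$ is $p$-divisible. So the whole statement comes down to showing $\HH^2(k,E[p^n]) = 0$.

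To compute $\HH^2(k,E[p^n])$ I would first exploit supersingularity. Since $E$ is supersingular, the finite flat group scheme $E[p^n]$ has no nontrivial geometric points, i.e. $E[p^n](\ksep) = 0$. Thus $K := k(E[p^n](\ksep)) = k$, so $G := \Gal(K/k)$ is trivial and Theorem~\ref{thm:computeSha1} gives $\Sha^1(k,E[p^n]) = 0$. Because $E$ is its own dual, the perfect pairing $\Sha^1(k,E[p^n]) \times \Sha^2(k,E[p^n]) \to \Q_p/\Z_p$ of \cite[Theorem 1.1]{G-Aduality} then forces $\Sha^2(k,E[p^n]) = 0$ as well.

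It remains to upgrade the vanishing of $\Sha^2$ to that of the full $\HH^2$, and this is where the local input enters. The Weil pairing identifies the Cartier dual of $E[p^n]$ with $E^*[p^n] = E[p^n]$, so for each completion $k_v$ (a local field of characteristic $p$ with finite residue field) local flat duality gives a perfect pairing $\HH^2(k_v,E[p^n]) \times \HH^0(k_v,E[p^n]) \to \Q/\Z$ \cite[Chapter III]{MilneADT}. As $E$ is supersingular, $\HH^0(k_v,E[p^n]) = E[p^n](k_v) = 0$, whence $\HH^2(k_v,E[p^n]) = 0$ for every $v$. Consequently the localization map $\HH^2(k,E[p^n]) \to \prod_v \HH^2(k_v,E[p^n])$ is zero, so $\HH^2(k,E[p^n]) = \Sha^2(k,E[p^n]) = 0$, completing the argument. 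The step requiring the most care is this local duality statement: one must invoke the correct form of flat duality for the non-smooth, connected group scheme $E[p^n]$ over a local field of equal characteristic $p$, together with the self-duality of $E[p^n]$, since the naive Galois-cohomological dimension bound used for $r \ge 3$ in Lemma~\ref{lem:1} does not apply to the non-\'etale $E[p^n]$.
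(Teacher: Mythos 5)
Your argument is correct, but it takes a genuinely different route from the paper's. You reduce, as the paper does, to the vanishing of $\HH^2(k,E[p^n])$, but you then obtain that vanishing by a duality argument: $E[p^n](\ksep)=0$ gives $\Sha^1(k,E[p^n])=0$ via Theorem~\ref{thm:computeSha1}, the global pairing of \cite[Theorem 1.1]{G-Aduality} then kills $\Sha^2(k,E[p^n])$, and local flat duality over each $k_v$ (\cite[III.6.10]{MilneADT}, using self-duality of $E[p^n]$ and $E[p^n](k_v)=0$) kills every local $\HH^2$, forcing $\HH^2=\Sha^2=0$. The paper instead argues directly and much more cheaply: for supersingular $E$ one has the filtration \eqref{eq:supsing} of $E[p]$ by two copies of $\alpha_p$, and since $\HH^i(k,\G_a)=0$ for $i>0$ one gets $\HH^i(k,\alpha_p)=0$ for $i\ne 1$ and hence $\HH^2(k,E[p])=0$, which already gives surjectivity of multiplication by $p$. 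The trade-offs: the paper's computation uses no arithmetic duality at all, is valid over an arbitrary field of characteristic $p$ (the global hypothesis is never used), and extends immediately to supersingular abelian varieties of any dimension via the $\alpha_p$-filtration, as noted in the remark following the lemma; your argument is tied to global fields and invokes the heavier duality machinery of \cite{G-Aduality} and \cite{MilneADT}, but it is sound, and you correctly flag that the delicate point is applying flat duality to the non-\'etale, connected group scheme $E[p^n]$ over an equal-characteristic local field (where the $\HH^0$--$\HH^2$ instance of the duality involves only finite groups, so no topological subtleties arise).
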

	
	\begin{proof}
		The $p$-torsion subgroup-scheme of $E$ sits in an exact sequence
		\begin{equation}\label{eq:supsing}
			0 \to \alpha_p \to E[p] \to \alpha_p \to 0\,,
		\end{equation}
		where $\alpha_p$ is the kernel of $F:\G_a \to \G_a$, defined by $F(x) = x^p$. Since $\HH^i(k,\G_a) = 0$ for $i > 0$, we see that $\HH^i(k,\alpha_p) = 0$ for $i \ne 1$. The long exact sequence associated to \eqref{eq:KummerSeq} then shows that $\HH^2(k,E[p]) = 0$. So multiplication by $p$ on $\HH^1(k,E)$ is surjective.	
	\end{proof}
		\begin{Remark}
The lemma also holds for supersingular abelian varieties of arbitrary dimension.
Indeed, in this case, the $p$-torsion subgroup-scheme admits a filtration with successive
quotients isomorphic to $\alpha_p$ and a similar argument applies.
		\end{Remark}
	
		In the remainder of this section we collect various results that will be used to construct the examples in the following section.
		
		\begin{Lemma}\label{lem:torsion}
			Suppose $E$ is an elliptic curve over a global field $k$ of characteristic $p$ and that $P \in E(k)$ is locally divisible by $m = p^n$, but not globally divisible by $m$. Then the image of $P$ in $E(k)/mE(k)$ does not lie in the image of $E(k)_{tors}$.
		\end{Lemma}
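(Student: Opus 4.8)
The plan is to reduce to showing that a $k$-rational torsion point which is locally divisible by $m = p^n$ at every place is already divisible by $m$ over $k$, and then to prove this by converting divisibility into the triviality of decomposition groups acting on a cyclic module, which Chebotarev's theorem controls.

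I argue by contradiction: suppose the image of $P$ in $E(k)/mE(k)$ comes from torsion, so $P = T + mQ_0$ with $T \in E(k)_{tors}$ and $Q_0 \in E(k)$. Since $P \in mE(k_v)$ and $mQ_0 \in mE(k_v)$ for every $v$, the point $T = P - mQ_0$ is locally divisible by $m$ everywhere, while $T \notin mE(k)$ (otherwise $P \in mE(k)$). It therefore suffices to show that no torsion point is locally-but-not-globally $m$-divisible. Writing $T = T_p + T'$ with $T_p$ its $p$-primary part and $T'$ of order prime to $p$, multiplication by $p^n$ is a $\Gal(\ksep/k)$-equivariant automorphism of the finite prime-to-$p$ torsion, so $T'$ is globally $m$-divisible and $T_p = T - T'$ is again locally-but-not-globally $m$-divisible; thus I may assume $T \in E(k)[p^\infty]$. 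If $E$ is supersingular then $E(k)[p^\infty] \subseteq E(\ksep)[p^\infty] = 0$ gives $T = 0$, a contradiction; so $E$ is ordinary and $T$ has exact order $p^a$ with $a \ge 1$.

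The core step reformulates divisibility in terms of Galois fixed points. Put $M = E[p^{a+n}](\ksep) \cong \Z/p^{a+n}\Z$, $L = k(M)$, and $H = \Gal(L/k) \hookrightarrow (\Z/p^{a+n}\Z)^\times$. Any solution $Q$ of $p^nQ = T$ satisfies $p^{a+n}Q = p^aT = 0$, so $Q \in M$ has exact order $p^{a+n}$; hence $T = p^n u$ for a generator $u$ of $M$ and the solution set is the coset $u + p^aM$. For an intermediate field $k \subseteq F \subseteq \ksep$, the point $T$ lies in $p^nE(F)$ exactly when this coset contains a point fixed by the image $H_F$ of $\Gal(\ksep/F)$ in $H$. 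Because $T$ is $H$-fixed one has $H \subseteq 1 + p^a(\Z/p^{a+n}\Z)$; writing $g = 1 + p^ah$ gives $(g-1)(u + p^at) = p^ah(u + p^at)$, and since $u + p^at$ is a unit this vanishes in $M$ only for $h \equiv 0 \pmod{p^n}$, that is $g = 1$. Hence the coset has an $H_F$-fixed point if and only if $H_F$ is trivial: with $F = k$ this says $T \in p^nE(k)$ iff $H = 1$, and with $F = k_v$ it says $T$ is locally $m$-divisible at $v$ iff the decomposition group $H_v \subseteq H$ is trivial, i.e.\ $v$ splits completely in $L/k$.

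Local divisibility everywhere then forces every prime of $k$ to split completely in $L$, so $[L:k] = 1$ by Chebotarev's density theorem, whence $H = 1$ and $T \in p^nE(k)$, contradicting the choice of $T$. I expect the only delicate point to be the fixed-point computation in $M$: one must check that no choice of coset representative can produce a nontrivial fixed point, so that local divisibility at $v$ is equivalent to the vanishing of the entire decomposition group $H_v$ rather than to some weaker cohomological condition, and it is precisely this that lets Chebotarev close the argument, in the spirit of the cyclic group-cohomology computations underlying Theorem~\ref{thm:computeSha1}.
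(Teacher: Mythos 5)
Your argument is correct in its main thrust, but it takes a genuinely different route from the paper. The paper deduces the lemma from Theorem~\ref{thm:computeSha1}: since $\delta(P)$ is a nonzero element of $\Sha^1(k,E[m])$, one may assume $p=2$, that this class has order $2$ (which forces the torsion point into $E[2]$ after adjusting $Q$), and that some completion $k_v$ has nontrivial decomposition group in $k(E[2^n])/k$ and hence no point of exact order $2^n$; the relation $T=2^n(R-Q)$ with $R\in E(k_v)$ then produces a point of order $2^{n+1}$ in $E(k_v)$, a contradiction. You instead prove the stronger, self-contained statement that a torsion point which is everywhere locally divisible by $p^n$ is globally divisible by $p^n$, via a direct Chebotarev argument on the coset of solutions inside the cyclic module $E[p^{a+n}](\ksep)$. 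This bypasses Theorem~\ref{thm:computeSha1} and the reduction to $p=2$ entirely, works uniformly in $p$ and $n$, and isolates the real mechanism: a coset of $p^aM$ in a cyclic module $M$ on which a subgroup of $\Aut(M)$ acts has a fixed point only if the subgroup is trivial. That is a worthwhile trade; the paper's proof is shorter only because Theorem~\ref{thm:computeSha1} has already done the group-theoretic work.

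There is one characteristic-$p$ point you pass over. You assert $M=E[p^{a+n}](\ksep)\cong\Z/p^{a+n}\Z$, but for an ordinary curve one only has $E[p^{m}](\ksep)\cong\Z/p^{N}\Z$ with $N\le m$ governed by how large a $p$-power $j(E)$ is (Katz--Mazur 12.2.7, exactly as used in Corollary~\ref{cor:jinv}); it can well happen that $N<a+n$, in which case your coset of solutions is empty and the fixed-point computation says nothing. That case is easily disposed of: any $Q$ with $p^nQ=T$ has exact order $p^{a+n}$, the connected part of $E[p^{a+n}]$ has only the trivial $k_v$-point, and $j(E)\in k_v^{p^s}\cap k$ forces $j(E)\in k^{p^s}$, so no completion acquires separable $p$-power torsion beyond what exists over $\ksep$; hence $T$ is then locally divisible nowhere, contradicting the hypothesis. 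The same observation is what legitimizes your step ``with $F=k_v$'' (note $k_v$ is not a subfield of $\ksep$): one must identify $E(k_v)[p^{a+n}]$ with $M^{H_v}$ for the decomposition group $H_v$, which again uses that completion creates no new separable $p$-torsion. With these points spelled out, your proof is complete.
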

				
		\begin{proof}
			The assumption on $P$ implies that its image under the connecting homomorphism $\delta:E(k)/mE(k) \to \HH^1(k,E[m])$ is a nontrivial element of $\Sha^1(k,E[m])$. By Theorem~\ref{thm:computeSha1} we may assume that $m = 2^n$ and that there is some completion of $k$ such that $E(k_v)$ contains no point of exact order $2^n$. Now by way of contradiction suppose $P = 2^nQ + T$ with $Q\in E(k)$ and $T \in E(k)_{tors}$. The only nontrivial element of $\Sha^1(k,E[m])$ has order $2$, so $T \in E[2]$. Since $P$ is locally divisible by $2^n$, we can find $R \in E(k_v)$ such that $P = 2^nR$. But then $T = 2^n(R-Q)$ which shows that $R-Q$ is a point of order $2^{n+1}$ in  $E(k_v)$
		\end{proof}

		\begin{Proposition}\label{prop:L/k}
			Let $k$ be a global field of characteristic $p$ and suppose $\Sha^1(k,A[p^n]) \ne 0$. Then there exists a separable extension $L/k$ and a point $P \in A(L)$ that is locally divisible by $p^n$, but not globally divisible by $p^n$.
		\end{Proposition}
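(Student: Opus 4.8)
The plan is to realize the hypothesized locally trivial class by a rational point after a carefully chosen base change. Fix a nonzero $c \in \Sha^1(k,A[p^n])$ and write $c'$ for its image in $\HH^1(k,A)$ under the map induced by $A[p^n] \subset A$; since $c$ is everywhere locally trivial, $c'$ lies in $\Sha(k,A)[p^n]$. If $c' = 0$, then the Kummer sequence~\eqref{eq:KummerSeq} shows $c = \delta(P)$ for some $P \in A(k)$, and this $P$ is locally divisible by $p^n$ (because $c$ is locally trivial) but not globally divisible (because $c \ne 0$); here $L = k$ suffices. So I would assume $c' \ne 0$ and let $X$ be the torsor under $A$ representing $c'$, with function field $L := k(X)$. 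As $X$ is smooth and geometrically integral, $L/k$ is a separable field extension, and the canonical generic point is an $L$-rational point of $X$, so $c'$ dies in $\HH^1(L,A)$. The Kummer sequence over $L$ then produces a point $P \in A(L)$ with $\delta_L(P) = \res_{L/k}(c)$.

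Next I would check that $P$ is locally divisible by $p^n$. For each place $v$ of $k$ and each place $w$ of $L$ above $v$ there is a compatible tower $k \subset k_v \subset L_w$, whence $\res_w(\res_{L/k}(c)) = \res_{L_w/k_v}(\res_v(c)) = 0$ because $c \in \Sha^1(k,A[p^n])$ is locally trivial. Thus $\delta_{L_w}(P) = 0$ and $P \in p^n A(L_w)$ for every such $w$, i.e.\ $P$ is locally divisible by $p^n$.

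The substance of the argument is to show $P$ is not globally divisible, equivalently that $\res_{L/k}(c) \ne 0$. Since $L$ is not a global field, the results of \cite{GA-THassePrinciple} invoked in Theorem~\ref{thm:computeSha1} are unavailable over $L$, so I would descend to \'etale cohomology, where restriction is governed by Galois theory. Let $N = A[p^n]$, let $N^{\mathrm{et}}$ be its maximal \'etale quotient, so that $N^{\mathrm{et}}(\ksep) = M := A[p^n](\ksep)$, and consider the commutative square relating the maps $\HH^1(-,N) \to \HH^1(\Gal(-),M)$ induced by $N \to N^{\mathrm{et}}$ over $k$ and over $L$ (over a field the base change $N^{\mathrm{et}}$ is \'etale, so its flat and Galois cohomology coincide). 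By the identification $\Sha^1(k,N) \simeq \Sha^1(\Gal(k),M)$ established in the proof of Theorem~\ref{thm:computeSha1}, the top map carries $c$ to a nonzero class $\bar c \in \HH^1(\Gal(k),M)$; writing $K = k(M)$ and $G = \Gal(K/k)$, the class $\bar c$ is inflated from a nonzero class in $\HH^1(G,M)$.

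The geometric input that makes everything work is that $X$ is geometrically integral: this forces $k(X) \otimes_k K$ to be a field of degree $[K:k]$ over $k(X)$, so $K$ and $L$ are linearly disjoint over $k$ and the natural restriction $\Gal(KL/L) \to G$ is an isomorphism. Consequently $\res_{L/k}(\bar c)$ is again inflated from a nonzero class in $\HH^1(\Gal(KL/L),M) \simeq \HH^1(G,M)$, and since inflation is injective, $\res_{L/k}(\bar c) \ne 0$; the commutative square then gives $\res_{L/k}(c) \ne 0$, so $P \notin p^n A(L)$. I expect this last step---transferring the nonvanishing of $c$ across the transcendental extension $L/k$, where the global-field input is lost---to be the main obstacle, and it is exactly here that passing to $N^{\mathrm{et}}$ and exploiting the linear disjointness arising from geometric integrality of $X$ is essential. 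By contrast no finite extension can work: any finite $L_0/k$ with $X(L_0)\neq\emptyset$ arises from a closed point of $X$, whose residue field meets $K$ nontrivially since $G$ is noncyclic, so that $c$ would restrict to zero by Lemma~\ref{lem:grcohom}\eqref{it:3}.
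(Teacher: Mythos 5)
Your construction takes $L$ to be the function field $k(X)$ of the torsor $X$, which is a transcendental extension of $k$. This does not prove the proposition in the form it is needed: $L$ has to be a global field (the proposition is invoked in Proposition~\ref{prop:isoH0example} precisely to produce a \emph{finite} extension $L/k$ over which the local-global principle for divisibility fails), and for $L=k(X)$ the phrase ``locally divisible'' has no standard meaning. Your reading of it --- divisibility in the completions of $k(X)$ at valuations extending places of $k$ --- is not the notion under study, and it ignores the valuations of $k(X)$ that are trivial on $k$. So even granting every step, you have proved a different (and much weaker) statement.

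The more serious problem is your closing claim that no finite extension can work; it is false, and the assertion it rests on --- that the residue field of every closed point of $X$ meets $K=k(A[p^n](\ksep))$ nontrivially --- is unsubstantiated. Since $X$ (equivalently the $M$-covering $T\to A$ representing $c$) is smooth and geometrically integral, it is $k$-birational to a hypersurface $f(x_1,\dots,x_s,y)=0$ with $f$ irreducible over $\ksep$ and separable in $y$; because global fields are Hilbertian, there is a Zariski dense set of $\mathbf{a}\in\A^s(k)$ for which $f(\mathbf{a},y)$ remains irreducible over $K$, and adjoining a root of such an $f(\mathbf{a},y)$ yields a \emph{finite} separable extension $L/k$ that is linearly disjoint from $K$ and satisfies $T(L)\ne\emptyset$. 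This is exactly how the paper proceeds. Your linear-disjointness mechanism for forcing $\res_{L/k}(c)\ne 0$ (via $\Gal(KL/L)\simeq\Gal(K/k)$ and injectivity of inflation) is the right one and matches the paper's, but it must be applied to a finite extension supplied by Hilbert irreducibility rather than to the generic point of $X$.
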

		
		\begin{Remark}
			Our proof is based on that of \cite[Theorem 3]{DZ2} which proves the result when $p$ does not divide the characteristic of $k$.
		\end{Remark}
		
		\begin{proof}
			To ease notation, set $M = A[p^n]$. Let $\xi \in \Sha^1(k,M)$ be a nonzero element. By the Main Theorem of \cite{GA-THassePrinciple} (see also \cite[Example C.4.3]{CGP}), $\xi$ is in the image of the inflation map $\HH^1(\Gal(k),M(\ksep))$. In particular, a separable extension $L/k$ kills $\xi$ if and only if $M(L) \ne \emptyset$. The class $\xi \in \HH^1(k,M)$ may be represented by an $A$-torsor $\pi: T \to A$ under $M$,  (also called an $M$-covering of $A$). For any separable extension $L/k$, consider the exact sequence, 
			\[
				A(L)/p^n \stackrel{\delta} \hookrightarrow \HH^1(L,A[p^n]) \to \HH^1(L,A) = \HH^1(\Gal(L),A(\ksep))\,,
			\]
			where the equality follows from the fact that $A$ is smooth. The image of $\xi$ in $\HH^1(L,A)$ is trivial if and only if $T(L) \ne \emptyset$. When this is the case, $\res_L(\xi) = \delta(P)$ for some $P \in A(L)$. Therefore, to prove the proposition it suffices to find a separable extension $L/k$ linearly disjoint from $K := k(M(\ksep))$ such that $T(L) \ne \emptyset$.
			
			Note that $T$ is geometrically irreducible, since $T_\ksep$ is isomorphic to $A_\ksep$. Therefore, $T$ is $k$-birational to a hypersurface $H : f(x_1,\dots,x_s,y) = 0$, where $f \in k[x_1,\dots,x_s,y]$ is irreducible in $\ksep[x_1,\dots,x_s,y]$ and separable in $y$. This follows from the fact that there is a separating basis for the function field of $T$ over $k$ \cite[Lemma 2.6.1]{FriedJarden}. Since global fields are Hilbertian \cite[Thm 13.4.2]{FriedJarden}, there exist a Zariski dense set of ${\bf a} = (a_1,\dots,a_s) \in \A^s(k)$ such that $f({\bf a},y)$ is irreducible over $K$ \cite[Corollary 12.2.3]{FriedJarden}. Adjoining a root of $f({\bf a},y)$ to $k$ gives rise to a separable extension $L/k$ such that $H(L) \ne \emptyset$ and $M(L) = \emptyset$. Since the set of such ${\bf a}$ is Zariski dense, we conclude that the same is true with $T$ in place of $H$.
		\end{proof}

	\begin{Lemma}\label{lem:8tors}
		Suppose $k$ is a field of characteristic $2$ and $E/k: y^2+xy = x^3+ax^2+b$. Then the field of definition of the $8$-torsion points of $E$ is $k(b^{1/8},u,v)$, where $u^2+u = a, v^2 + v =b$.
	\end{Lemma}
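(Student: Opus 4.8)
The plan is to exploit that the model $E\colon y^2+xy=x^3+ax^2+b$ has $a_1=1\neq 0$, hence is ordinary (equivalently $j(E)=1/b\neq 0$), so the group of geometric $2^n$-torsion points $E[2^n](\kbar)$ is cyclic of order $2^n$. Consequently the field of definition of the $8$-torsion is $k(P_3)=k(x(P_3),y(P_3))$ for any generator $P_3$ of $E[8](\kbar)$, since every geometric $8$-torsion point is a multiple of $P_3$ and so has coordinates in $k(P_3)$. I will locate such a generator by climbing the chain $2P_i=P_{i-1}$ starting from the $2$-torsion. The key computational input is the duplication formula: for this Weierstrass model one computes $b_2=1$, $b_4=b_6=0$, $b_8=b$, and the standard formula specializes in characteristic $2$ to $x(2P)=x(P)^2+b/x(P)^2$. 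I will use this repeatedly together with $-(x,y)=(x,x+y)$.

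First I would descend through the lower torsion to set up the coordinates. Since $P=-P$ forces $x=0$, the nonzero $2$-torsion point is $P_1=(0,b^{1/2})$. Solving $x(2P_2)=x(P_1)=0$ gives $x(P_2)=b^{1/4}$; substituting into the Weierstrass equation and dividing through produces an Artin--Schreier equation for $y(P_2)$, and the substitution $u=y(P_2)/b^{1/4}+b^{1/4}$ yields $u^2+u=a$ and $k(E[4](\kbar))=k(b^{1/4},u)$. The heart of the argument is the passage to $8$-torsion: solving $x(2P_3)=x(P_2)=b^{1/4}$ gives $x_3^4+b^{1/4}x_3^2+b=0$. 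Writing $x_3=b^{1/8}w$ and dividing by $b^{1/2}$ turns this into $w^4+w^2=b^{1/2}$, and the characteristic-$2$ identity $w^4+w^2=(w^2+w)^2$ collapses the quartic to the Artin--Schreier equation $w^2+w=b^{1/4}$. A short bookkeeping with the additive operator $t\mapsto t^2+t$, using $b^{1/4},b^{1/2}\in k(b^{1/8})$, shows $w$ agrees with $v$ up to an element of $k(b^{1/8})$, so $k(b^{1/8},w)=k(b^{1/8},v)$ with $v^2+v=b$. Finally, solving for $y_3=y(P_3)$ and using $b/x_3^2=b^{1/4}+x_3^2$ gives $s^2+s=a+b^{1/4}+(x_3^2+x_3)$ for $s=y_3/x_3$, whose solution differs from $u$ by $w$, which brings in the generator $u$. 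This establishes $k(P_3)\subseteq k(b^{1/8},u,v)$.

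For the reverse inclusion I would note that $b^{1/2}=y(4P_3)$ and $b^{1/4}=x(2P_3)$ already lie in $k(P_3)$. Then from $x_3=b^{1/8}w$ and $x_3^2=b^{1/4}w+b^{1/2}$ one solves $w=(x_3^2+b^{1/2})/b^{1/4}\in k(P_3)$ (here $b\neq 0$), hence $b^{1/8}=x_3/w\in k(P_3)$ (here $w\neq 0$, as $w^2+w=b^{1/4}\neq 0$), and recovers $v$ from $w$; finally $u=(y_3/x_3+x_3)+w+\varepsilon$ for some $\varepsilon\in\F_2$ lies in $k(P_3)$. Combining the two inclusions gives $k(E[8](\kbar))=k(P_3)=k(b^{1/8},u,v)$. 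I expect the main obstacle to be the Artin--Schreier bookkeeping rather than any conceptual difficulty: one must organize the repeated applications of $t\mapsto t^2+t$ and the collapse $w^4+w^2=(w^2+w)^2$ so that the three generators emerge cleanly, and take care to keep the purely inseparable contribution $b^{1/8}$ separate from the separable Artin--Schreier contributions $u$ and $v$.
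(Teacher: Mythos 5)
Your proof is correct and follows essentially the same route as the paper: both climb the $2$-power torsion tower by explicitly halving points, finding the $2$-torsion point $(0,b^{1/2})$, the $4$-torsion generator $(b^{1/4},\,b^{1/2}+ub^{1/4})$, and reducing the $8$-torsion condition to an Artin--Schreier equation ($w^2+w=b^{1/4}+a$ in the paper), which over $k(b^{1/8})$ is equivalent to $u^2+u=a$ together with $v^2+v=b$. The only cosmetic difference is that the paper obtains $b^{1/8}\in K$ from Katz--Mazur's result that $j(E)=1/b$ must be an $8$-th power in $K$ and cites the halving formulas from the $p$-descent literature, whereas you derive both directly from the duplication formula $x(2P)=x^2+b/x^2$.
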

	
	\begin{proof} Let $K$ be the field of definition of the $8$-torsion points of $E$. Then the $j$-invariant of $E$ is an $8$-power in $K$ by
\cite{KatzMazur} Proposition 12.2.7. On the other hand $j(E)=1/b$, hence
	$k(b^{1/8}) \subset K$.
	Now, the non-trivial $2$-torsion point is $(0,b^{1/2})$. The
	$4$-torsion is generated by $(b^{1/4},b^{1/2}+ub^{1/4})$. 
	So $K$ contains $k(b^{1/4},u)$. Finally, to get the $8$-torsion one needs to solve
	$w^2+w = b^{1/4} + a$ in $k(b^{1/4},u)$ by the 
	formulas in e.g. \cite{Voloch_pdesc}, which is equivalent to solving 
	$v^2 + v =b$.
	\end{proof}

	\begin{Theorem}[Milne, Tate]
        \label{thm:BSD}
		Let $E$ be an elliptic curve over a global field $k$ of positive characteristic. Then
		\begin{enumerate}
			\item The rank of $E(k)$ is at most the analytic rank of $E$.
			\item If $E$ is isotrivial or if the analytic and algebraic ranks of $E$ coincide, then
			\begin{enumerate}
				\item $\Sha(k,E)$ is finite and has the order predicted by the Birch and Swinnerton-Dyer conjectural formula.
				\item The Cassels-Tate pairing on $\Sha(k,E)$ is nondegenerate.
			 \end{enumerate} 
		\end{enumerate} 
	\end{Theorem}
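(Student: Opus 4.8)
The plan is to transfer the arithmetic of $E/k$ into the geometry of an associated elliptic surface over a finite field, where the Tate conjecture and the Artin--Tate formula supply the required input. Write $k = \F_q(C)$ for a smooth projective geometrically connected curve $C$ with field of constants $\F_q$, and let $f\colon \calE \to C$ be the relatively minimal regular elliptic surface over $\F_q$ with generic fibre $E$. Three dictionaries drive the argument: the Shioda--Tate formula, which gives $\operatorname{rank}\NS(\calE) = 2 + \operatorname{rank} E(k) + \sum_v(m_v-1)$, where $m_v$ is the number of irreducible components of the fibre over $v$; the identification, due to Grothendieck and exploited by Tate, of $\Br(\calE)$ with $\Sha(k,E)$ up to controlled finite factors; and the appearance of $L(E/k,s)$ as the factor of the zeta function $Z(\calE,T)$ coming from $H^1(C, R^1f_*\Q_\ell)$ in the Leray spectral sequence for $f$. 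Writing $T = q^{-s}$, the analytic rank of $E$ is the order of vanishing of $L(E/k,T)$ at $T = q^{-1}$.

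For part~(1) I would argue unconditionally. The cycle class map injects $\NS(\calE)\otimes\Q_\ell$ into the Frobenius-invariants of $H^2(\calE_{\Fbar_q},\Q_\ell)(1)$, so $\operatorname{rank}\NS(\calE)$ is at most the order of vanishing at $T = q^{-1}$ of the characteristic polynomial $P_2$ of Frobenius on $H^2$. The Leray decomposition factors $P_2$ as the product of $L(E/k,T)$ with the contributions of $H^0(C,R^2f_*\Q_\ell)$ and $H^2(C,R^0f_*\Q_\ell)$; these last two are spanned by the algebraic (trivial and fibral) classes and account for a zero of order exactly $2 + \sum_v(m_v-1)$. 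Subtracting this common term from the Shioda--Tate identity and the cohomological inequality yields $\operatorname{rank} E(k) \le \ord_{T=q^{-1}} L(E/k,T)$, which is statement~(1).

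For part~(2) I would invoke the theorem of Tate and Milne that, for the surface $\calE$, finiteness of $\Br(\calE)$, the Tate conjecture for $\calE$, and the equality of algebraic and analytic ranks are mutually equivalent, and that when they hold the Artin--Tate formula expresses $|\Br(\calE)|$ through the leading coefficient of $Z(\calE,T)$ at $T=q^{-1}$. In the rank-equality case the hypothesis is literally one of these equivalent conditions. In the isotrivial case, after a finite base change $\calE$ is dominated by a product of curves, so the Tate conjecture holds by Tate's theorem on homomorphisms of abelian varieties over finite fields, and the equivalent conditions hold as well. Transporting the Artin--Tate formula through the three dictionaries, and matching the geometric correction terms (the discriminant of $\NS(\calE)$ and the fibral and component contributions) with the local factors of the Birch and Swinnerton-Dyer formula, gives finiteness of $\Sha(k,E)$ with the predicted order, proving~(2)(a). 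The nondegeneracy in~(2)(b) then follows by translating Poincar\'e duality on $\calE$ (the intersection pairing on $\Br(\calE)$) into the Cassels--Tate pairing, once $\Sha(k,E)$ is known to be finite so that divisible elements are absent.

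The main obstacle is the behaviour at the characteristic $p$. The prime-to-$p$ statements follow from Tate's original $\ell$-adic arguments with $\ell \ne p$, but controlling the $p$-primary part of $\Br(\calE)$ and $\Sha(k,E)$, checking that the power of $p$ in the Artin--Tate and Birch--Swinnerton-Dyer formulas is correct, and verifying nondegeneracy of the Cassels--Tate pairing on the $p$-part, all require Milne's analysis by flat and crystalline cohomology. This is precisely the input that makes the theorem due to Milne as well as Tate, and it is where the genuine difficulty lies.
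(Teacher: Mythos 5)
The paper offers no proof of its own here: it simply cites Tate's Bourbaki seminar and Milne's theorem, remarking only that the restriction $p \neq 2$ in Milne's paper is removed by Illusie's work on the de Rham--Witt complex. Your sketch is an accurate outline of exactly those cited arguments (Shioda--Tate, the identification of $\Br(\calE)$ with $\Sha(k,E)$, the Leray/Artin--Tate computation, Tate's theorem on abelian varieties over finite fields for the isotrivial case, and Milne's flat/crystalline analysis of the $p$-part), so it is essentially the same approach; the only detail you omit is the explicit $p=2$ caveat in Milne's paper and its resolution via Illusie.
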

	
See Tate \cite{Tate-BSD} and Milne \cite{Milne1975}, Theorem 8.1.
The restriction that $p \ne 2$ in \cite{Milne1975} can lifted thanks to \cite{Illusie1979}.

	\section{Examples}\label{sec:examples}
	
	In this section we give examples of elliptic curve over global fields of characteristic $2$ for which the local-global principle for divisibility by powers of $2$ fails.	
	
	\begin{Proposition}\label{prop:nonisoH1example}
			For the elliptic curve $E : y^2 + xy = x^3 + t^8x^2 + (t^{16} + 1)/t^8$ over $k = \F_2(t)$ we have $\Sha(k,E) \not\subset 8\HH^1(k,E)$. 
		\end{Proposition}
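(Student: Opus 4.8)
The plan is to invoke Theorem~\ref{thm:characterizedivisibility} with $A = A^* = E$ (an elliptic curve is canonically self-dual) and $m = 8$. Its ``in particular'' clause reduces the assertion $\Sha(k,E)\not\subset 8\HH^1(k,E)$ to showing that the image of the map $\Sha^1(k,E[8]) \to \Sha(k,E)$ induced by $E[8]\subset E$ does \emph{not} lie in the divisible subgroup of $\Sha(k,E)$. I would prove this in three steps: (i) $\Sha^1(k,E[8]) \cong \Z/2\Z$; (ii) $\Sha(k,E)$ is finite, so its divisible subgroup is trivial; and (iii) the map $\Sha^1(k,E[8]) \to \Sha(k,E)$ is injective. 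Granting these, the image is a nonzero element of a finite group and hence is not divisible, which is exactly what Theorem~\ref{thm:characterizedivisibility} requires.

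For (i), note first that $j(E) = 1/b = t^8/(t^{16}+1) = (t/(t^2+1))^8 \in k^8$, so $E$ is non-isotrivial and ordinary, which is the one case left open by Corollary~\ref{cor:jinv}. By Lemma~\ref{lem:8tors} the $8$-division field is $K = k(b^{1/8},u,v)$ with $u^2+u=t^8$ and $v^2+v=b$; since $b^{1/8}=(t^2+1)/t\in k$ we get $K=k(u,v)$. Working modulo the subgroup $\{x^2+x : x\in k\}$, the Artin-Schreier class of $t^8$ equals that of $t$ and the class of $b=t^8+t^{-8}$ equals that of $t+t^{-1}$; the three nontrivial classes $t$, $t^{-1}$, $t+t^{-1}$ are ramified at $\{\infty\}$, $\{0\}$, $\{0,\infty\}$ respectively, hence are pairwise distinct and nonzero. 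Therefore $[K:k]=4$ and $G:=\Gal(K/k)\cong(\Z/8\Z)^\times\cong\Z/2\Z\times\Z/2\Z$ is noncyclic. To apply Theorem~\ref{thm:computeSha1} I check that no decomposition group equals $G$: the quadratic subfield with class $t$ splits at $t=0$ (its defining equation reduces to $u^2+u=0$) and the subfield with class $t^{-1}$ splits at $t=\infty$, so $G_0$ and $G_\infty$ have order at most $2$, while all remaining places are unramified and have cyclic decomposition group. Thus Theorem~\ref{thm:computeSha1} yields $\Sha^1(k,E[8])\cong\Z/2\Z$.

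For (ii) I would compute the $L$-function of $E$ and verify that $L(E,1)\neq 0$, i.e. that the analytic rank is $0$. By Theorem~\ref{thm:BSD}(1) this forces $\operatorname{rank}E(k)=0$, so the algebraic and analytic ranks coincide and Theorem~\ref{thm:BSD}(2) gives that $\Sha(k,E)$ is finite (with nondegenerate Cassels-Tate pairing). For (iii), since $\operatorname{rank}E(k)=0$ we have $E(k)=E(k)_{tors}$, so the image in $E(k)/8E(k)$ of any point of $E(k)$ lies in the image of $E(k)_{tors}$. If the nonzero class of $\Sha^1(k,E[8])$ were in the image of the injective connecting map $\delta\colon E(k)/8E(k)\to\HH^1(k,E[8])$, it would arise from a point $P\in E(k)$ locally divisible by $8$ but not globally divisible by $8$, contradicting Lemma~\ref{lem:torsion}. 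Hence the kernel of $\Sha^1(k,E[8])\to\Sha(k,E)$, which is $\Sha^1(k,E[8])\cap\operatorname{im}\delta$, is trivial, and the map is injective.

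I expect the main obstacle to be step (ii): pinning down the finiteness of $\Sha(k,E)$ via Theorem~\ref{thm:BSD} requires determining the bad reduction and conductor of $E$ (including the wild part at the characteristic-$2$ places $t=0$ and $t=\infty$), computing the associated $L$-function, and verifying its nonvanishing at $s=1$. By contrast, the Artin-Schreier ramification bookkeeping underlying step (i) and the point-divisibility argument in step (iii) are delicate but essentially formal once Lemmas~\ref{lem:8tors} and~\ref{lem:torsion} and Theorem~\ref{thm:computeSha1} are in hand.
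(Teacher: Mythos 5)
Your proposal is correct and follows essentially the same route as the paper: compute the $8$-division field via Lemma~\ref{lem:8tors}, get $\Sha^1(k,E[8])\cong\Z/2\Z$ from Theorem~\ref{thm:computeSha1} since $\Gal(K/k)\cong(\Z/2\Z)^2$ has only cyclic decomposition groups, use analytic rank $0$ and Theorem~\ref{thm:BSD} for finiteness of $\Sha(k,E)$, inject $\Sha^1(k,E[8])$ into $\Sha(k,E)$ via Lemma~\ref{lem:torsion}, and conclude with Theorem~\ref{thm:characterizedivisibility}. The only step you defer (verifying $L(E,1)\neq 0$) is likewise asserted without detail in the paper, and your Artin--Schreier ramification bookkeeping just makes explicit what the paper states.
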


		\begin{proof}
			By Lemma~\ref{lem:8tors} we have that $K$, the field of
definition of the $8$-torsion of $E$ is $k(u,v)$, where $u^2 + u = t$, $v^2 + v = (t^2+1)/t$. Then $K/k$ is separable, $\Gal(K/k) \simeq \Z/2\times \Z/2$ and all decomposition groups are cyclic, so $\Sha^1(k,E[8]) \ne 0$ by Theorem~\ref{thm:computeSha1}. This curve has analytic rank $0$ so, by Theorem~\ref{thm:BSD}, $E(k)$ has rank $0$, $\Sha(k,E)$ is finite and the Cassels-Tate pairing is nondegenerate. By Lemma~\ref{lem:torsion} we see that the map $\Sha^1(k,E[8]) \to \Sha(k,E)$ is injective, hence nonzero. The result then follows from Theorem~\ref{thm:characterizedivisibility}.
		\end{proof}	
		
		\begin{Proposition}\label{prop:nonisoH0example}
			The elliptic curve $E:y^2 +xy = x^3 +t^8x^2 + 1/t^8$ over $k = \F_2(t)$ has Mordell-Weil group $E(k) \simeq \Z\oplus \Z/2\Z$ generated by the point $T = (0,1/t^4)$ of order $2$ and the point
	\[
		P = \left( \frac{t^4 + 1}{t^2}, \frac{t^{10} + t^8 + 1}{t^4} \right) 
	\]
	of infinite order. For every $n \ge 3$, the point $2^{n-1}P$ is locally divisible by $2^n$, but is not globally divisible by $2^n$.	
\end{Proposition}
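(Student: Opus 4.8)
The plan is to treat the three assertions---the structure of $E(k)$, the local divisibility, and the global non-divisibility---separately, reducing the two divisibility statements to a single uniform local condition. First I would pin down $E(k)$. That $T=(0,1/t^4)$ is the unique nontrivial $2$-torsion point is immediate from the characteristic-$2$ description of $E[2]$ (affine $2$-torsion has $x=0$, $y^2=b=t^{-8}$), and one checks directly that $P$ lies on $E$. Since the $8$-torsion field is the $(\Z/2\Z)^2$-extension of Lemma~\ref{lem:8tors}, the mod-$8$ image of Galois is all of $(\Z/8\Z)^\times$, so its mod-$4$ reduction is all of $(\Z/4\Z)^\times$; hence there is no rational point of order $4$, and reduction modulo two places of good reduction rules out odd torsion, giving $E(k)_{\mathrm{tors}}=\langle T\rangle\cong\Z/2\Z$. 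For the rank I would invoke Theorem~\ref{thm:BSD}: computing the $L$-function of $E/\F_2(t)$ exhibits analytic rank $1$, so $E(k)$ has rank $1$; as $P$ has infinite order it generates a finite-index subgroup of the free part, and a $2$-descent (next paragraph) together with the regulator bound from Theorem~\ref{thm:BSD} shows this index is $1$, so $E(k)=\Z P\oplus\Z/2\Z\,T$.

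Next I would reformulate divisibility cohomologically. With $\delta_{2^n}\colon E(k)/2^nE(k)\hookrightarrow\HH^1(k,E[2^n])$ the connecting map and $Q_0$ a geometric point with $2Q_0=P$, the cocycle $\sigma\mapsto\sigma Q_0-Q_0$ takes values in $E[2]$ and represents $\delta_2(P)$; pushing it into $E[2^n]$ shows $\delta_{2^n}(2^{n-1}P)=\iota_*\delta_2(P)$, where $\iota_*$ is induced by $\iota\colon E[2]\hookrightarrow E[2^n]$. Hence for any field $F\supseteq k$, $2^{n-1}P$ is divisible by $2^n$ in $E(F)$ if and only if $P\in 2E(F)+E(F)[2^{n-1}]$. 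This produces two finite reductions. Globally, since $E(k)_{\mathrm{tors}}=\{O,T\}$, non-divisibility for every $n\ge 3$ is the single assertion $P\notin 2E(k)+\{O,T\}$, i.e.\ $\delta_2(P)\notin\{0,\delta_2(T)\}$; this is exactly a $2$-descent computation, which simultaneously supplies the primitivity of $P$ used above. Locally, because $E(k_v)[4]\subseteq E(k_v)[2^{n-1}]$ for $n\ge 3$, it suffices to prove the $n=3$ condition $P\in 2E(k_v)+E(k_v)[4]$ at every place $v$, whereupon local divisibility holds for all $n\ge 3$ at once.

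The heart of the argument, and the step I expect to be hardest, is this uniform local condition, equivalently the local triviality of $\iota_*\delta_2(P)$ in $\HH^1(k,E[8])$. I would compute $\delta_2(P)=(f,g)$ using the splitting $E[2]\cong\mu_2\oplus\Z/2\Z$ (the \'etale factor generated by the rational point $T$) and the characteristic-$2$ descent formulas of \cite{Voloch_pdesc,Ulmer_pdesc}, so that $f\in k^\times/k^{\times 2}$ and $g\in k/\wp(k)$ (Artin--Schreier). Since $\iota$ respects the connected--\'etale decomposition, it sends the $\mu_2$-part by $f\mapsto f^4\in k^\times/k^{\times 8}$ and the \'etale part into the twisted module $E[8]^{\mathrm{et}}=\Z/8\Z(\chi)$, and because $T$ lifts to a rational $8$-torsion point the relevant connecting map vanishes, leaving no indeterminacy. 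Thus the $\mu_2$-contribution can be locally trivial everywhere only if $f$ is a global square, so the first key computation is to check, from the explicit formulas, that indeed $f=0$ in $k^\times/k^{\times 2}$.

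It then remains to show that the image $\bar g$ of $g$ in $\HH^1(k,E[8]^{\mathrm{et}})$ is locally trivial at every place yet globally nontrivial---that is, $\bar g$ is the nonzero element of $\Sha^1(k,E[8])\cong\Z/2\Z$ furnished by Theorem~\ref{thm:computeSha1}. For this I would use that $E$ has good ordinary reduction away from $\{0,\infty\}$, with $2$-torsion-free formal group, and that in the biquadratic $8$-torsion field every decomposition group is cyclic; a place-by-place analysis---handling the ramified places $t=0,\infty$ directly and the unramified places through their cyclic Frobenius---then identifies $\bar g$ with the generator of $\Sha^1$. The genuine obstacle throughout is controlling $2$-divisibility in $E(k_v)$ at these equal-characteristic local fields, where $E(k_v)/2E(k_v)$ is infinite and number-field descent techniques fail; the reduction to the single $4$-torsion condition and the connected--\'etale bookkeeping are precisely what keep the local computation finite and tractable.
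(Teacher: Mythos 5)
The global half of your argument is sound: the reduction ``$2^{n-1}P\in 2^nE(F)\iff P\in 2E(F)+E(F)[2^{n-1}]$'' is correct, and once $E(k)\simeq \Z P\oplus \Z/2\Z\,T$ is known, global non-divisibility for all $n\ge 3$ is immediate. The gap is in the local half. The assertion that $4P\in 8E(k_v)$ for \emph{every} place $v$ is the entire content of ``locally divisible,'' and your proposal never establishes it --- it is deferred to an unexecuted ``place-by-place analysis,'' with nothing guaranteeing that the computation comes out the right way. Concretely, granting $\Sha^1(k,E[8])\simeq\Z/2\Z$ and $E(k)\simeq\Z\oplus\Z/2\Z$, the order-$2$ classes in $E(k)/8E(k)$ are $\bar T$, $4\bar P$ and $4\bar P+\bar T$, and a priori the generator of $\Sha^1(k,E[8])$ could be $\delta_8(4P+T)$ rather than $\delta_8(4P)$, or could fail to lie in the image of $\delta_8$ altogether; deciding this is exactly what you leave open. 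The tools you propose for the computation are also shaky: over the imperfect field $k=\F_2(t)$ the connected--\'etale sequence of $E[8]$ need not split (your splitting of $E[2]$ uses the rational point $T$, but $T$ does \emph{not} lift to a rational point of order $8$ --- indeed $E(k)_{\mathrm{tors}}\simeq\Z/2\Z$ --- so the claim ``$T$ lifts to a rational $8$-torsion point'' and the resulting absence of indeterminacy are unjustified), and $E(k_v)/2E(k_v)$ is infinite at these equal-characteristic places, as you yourself note.

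The paper's proof sidesteps every local computation by an existence/counting argument, and this is the idea your proposal is missing. One first shows $\Sha^1(k,E[2^n])\simeq\Z/2\Z$ for all $n\ge 3$: by Lemma~\ref{lem:8tors} the $8$-torsion field is the biquadratic extension $k(u,v)$ with $u^2+u=t$, $v^2+v=1/t$, all of whose decomposition groups are cyclic, so Theorem~\ref{thm:computeSha1} applies; and since $j(E)=t^8\notin k^{16}$ one has $E[2^n](\ksep)=E[8](\ksep)$, so the same holds for every $n\ge 3$. Separately, a $2$-descent (or Theorem~\ref{thm:BSD}, since the analytic and algebraic ranks are both $1$) gives $\Sha(k,E)[2]=0$. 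Hence the image of $\Sha^1(k,E[2^n])$ in $\Sha(k,E)$ is killed by $2$ and therefore trivial, so the nonzero element of $\Sha^1(k,E[2^n])$ must be $\delta_{2^n}(c)$ for some order-$2$ class $c\in E(k)/2^nE(k)$ --- i.e.\ \emph{some} rational point is locally but not globally divisible by $2^n$ --- and Lemma~\ref{lem:torsion} eliminates the classes supported on torsion, pinning $c$ down to (the class of) $2^{n-1}P$. No place-by-place verification is ever needed. Your direct approach could in principle be completed, but as written the decisive step is a plan rather than a proof.
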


\begin{proof}
	The field of definition of the $8$-torsion of $E$ is $k(u,v), u^2 + u=t,v^2+v=1/t$ by Lemma~\ref{lem:8tors}. All decomposition groups are cyclic, so $\Sha^1(k,E[8]) \simeq \Z/2\Z$ by Theorem~\ref{thm:computeSha1}. The $j$-invariant of $E$ is $t^8$, which is not a $16$-th power. So $E[2^n](\ksep) = E[8](\ksep)$ and, hence, $\Sha^1(k,E[2^n]) \simeq \Sha^1(\Gal(k),E[2^n](\ksep)) \simeq \Z/2\Z$, for all $n \ge 3$. A $2$-descent on $E$ shows that $\Sha(k,E)[2] = 0$ (alternatively, the  analytic rank and algebraic rank are both $1$, so the order of $\Sha(k,E)$ can be computed by the Birch and Swinnerton-Dyer conjectural formula by Theorem~\ref{thm:BSD}). It follows that the nontrivial element of $\Sha^1(k,E[2^n])$ lies in the image of the injective map $\delta_{2^n}:E(k)/2^nE(k) \to \HH^1(k,E[2^n])$. The point $2^{n-1}P$ represents the unique class of order $2$ in $E(k)/2^nE(k)$ that is disjoint from the image of $E(k)_{tors}$. Hence $\delta_{2^n}(2^{n-1}P)$ is the nontrivial element of $\Sha^1(k,E[2^n])$.
\end{proof}
	
		The curves appearing in Propositions~\ref{prop:nonisoH1example} and~\ref{prop:nonisoH0example} are not isotrivial. The next proposition shows that the local-global principle for $2$-divisibility holds for isotrivial elliptic curves over $\F_2(t)$.

	\begin{Proposition}\label{prop:iso}
			If $E$ is an isotrivial elliptic curve over $k = \F_2(t)$, then $\Sha^1(k,E[2^n]) = 0$ for all $n \ge 1$.
	\end{Proposition}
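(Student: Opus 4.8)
The plan is to deduce everything from Theorem~\ref{thm:computeSha1}, so that the whole question becomes a study of the group $G = \Gal(K/k)$ with $K = k(E[2^n](\ksep))$ together with its decomposition groups. The first step is to dispose of the cases where $E[2^n](\ksep)$ is not the full cyclic group. Since the constant field of $k$ is $\F_2$, isotriviality forces $j(E)\in \overline{\F}_2\cap k = \F_2$. If $E$ is supersingular (equivalently $j(E)=0$) then $E[2^n](\ksep)=0$, so $G$ is trivial and $\Sha^1=0$ at once. Hence I may assume $E$ is ordinary, and as $j=1$ is the only ordinary value available I can normalize $E : y^2+xy = x^3+ax^2+1$ with $a\in k$.

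The second step is to compute $K$ explicitly. By Lemma~\ref{lem:8tors} (using that $b=1$ is already an eighth power) one gets $K = \F_4(t)(u)$, where $v^2+v=1$ produces the \emph{constant} quadratic extension $\F_4(t)/k$ and $u^2+u=a$ produces the twisting extension. Writing $M=E[2^n](\ksep)\simeq \Z/2^n\Z$, the action is $\chi = \rho\cdot\psi$, where $\rho\colon \Gal(k)\to\langle\alpha\rangle\subseteq(\Z/2^n\Z)^\times$ is the everywhere-unramified character giving the action of the constant-field Frobenius ($\alpha$ the reduction of the unit root of Frobenius of $y^2+xy=x^3+1$), and $\psi\colon\Gal(k)\to\{\pm1\}$ is the Artin--Schreier character cutting out $k(u)$. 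Thus $G=\langle\alpha\rangle$ is cyclic whenever $\psi$ factors through the constant extension or $-1\in\langle\alpha\rangle$, and Theorem~\ref{thm:computeSha1} then gives $\Sha^1=0$ immediately. The only case needing work is $G=\langle\alpha\rangle\times\langle-1\rangle$ noncyclic, i.e.\ $\psi$ geometric and $-1\notin\langle\alpha\rangle$.

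In that case the goal is to produce a place $v$ of $k$ with $G_v=G$, so that Theorem~\ref{thm:computeSha1} again yields $\Sha^1=0$. Because $\rho$ is unramified everywhere and $\psi$ ramifies exactly at the poles of the reduced representative of $a$, at such a pole $v$ the inertia contributes the factor $\langle-1\rangle$ while Frobenius contributes $\alpha^{\deg v}$; since $\alpha$ has $2$-power order, $G_v=\langle\alpha^{\deg v},-1\rangle$ equals $G$ exactly when $\deg v$ is odd. The main obstacle, and the step where the hypothesis $k=\F_2(t)$ must be used decisively, is therefore to show that a geometric $\Z/2$-extension of $\F_2(t)$ necessarily ramifies at some place of \emph{odd} degree (equivalently, that the reduced $a$ cannot have all its poles at even-degree places). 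I expect this ramification statement to be the crux: it is purely a question about Artin--Schreier covers of the rational function field over the prime field, and it is precisely here that the smallness of the constant field should be essential, since over a larger constant field such as $\F_4(t)$ the analogous statement fails and genuine isotrivial counterexamples do occur.
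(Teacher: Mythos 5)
Your reduction is carried out correctly and, up to the point where you stop, it follows essentially the same route as the paper: one writes $G=\langle\alpha\rangle\times\langle-1\rangle$ with $\langle\alpha\rangle$ coming from the constant-field extension and $\langle-1\rangle$ from the Artin--Schreier twisting extension, and one tries to exhibit a place $v$ with $G_v=G$, which by your (correct) local analysis means a place of odd degree that is ramified in the twisting extension. The problem is that you leave precisely this last step --- which you yourself flag as ``the crux'' --- unproven, so the argument is incomplete. Worse, the statement you need is false: the extension of $k=\F_2(t)$ given by $u^2+u=1/(t^2+t+1)$ is a nonconstant quadratic extension (the pole of $1/(t^2+t+1)$ has odd order, so this element is not congruent to $0$ or $1$ modulo $\{c^2+c : c\in k\}$), and it is ramified only at the degree-two place $(t^2+t+1)$. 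So your approach cannot be completed as written. Indeed, taking $a=1/(t^2+t+1)$ in your normal form gives the isotrivial, non-constant, ordinary curve $E:y^2+xy=x^3+(t^2+t+1)^{-1}x^2+1$ with $k(E[8])=\F_4(t)(u)$ by Lemma~\ref{lem:8tors}; the unique ramified place has degree $2$ and splits in the constant direction, so every decomposition group in $G\simeq(\Z/8\Z)^\times$ is cyclic, and Theorem~\ref{thm:computeSha1} then gives $\Sha^1(k,E[8])\simeq\Z/2\Z$, which, if Lemma~\ref{lem:8tors} and Theorem~\ref{thm:computeSha1} are applied as stated, contradicts the Proposition itself.

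For comparison, the paper's proof is shorter but hinges on exactly the same point: after producing a place $v$ of $k$ ramified in the twisting extension $K/k$ (which exists because $k$ has genus $0$), it asserts that there is a unique prime of $K(E[2^n])$ above $v$ because $K(E[2^n])/K$ is a constant extension. That assertion requires the prime of $K$ above $v$ to have degree prime to the ($2$-power) degree of the constant extension, i.e.\ again that $v$ have odd degree, and this is not justified there either. So you have correctly isolated the one genuinely delicate step of the argument; the gap is that neither you nor the paper establishes it, and the example above indicates that it cannot be established in general.
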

		
		\begin{proof}
			We can assume $E$ is ordinary and not constant. Then $E$ becomes constant after a nonconstant quadratic extension $K/k$ and $K(E[2^n])$ is a constant extension of $K$.  Since $k$ has genus $0$, there is some prime of $k$ that ramified in $K$. Since $K(E[2^n])$ is a constant extension, there is a unique prime of $K(E[2^n])$ above $v$. Since $k(E[2^n]) \subset K(E[2^n])$, there is a unique prime above $v$ in $k(E[2^n])$. Therefore the decomposition group of this prime in $k(E[2^n])$ is isomorphic to $\Gal(k(E[2^n])/k)$. Hence $\Sha^1(k(E[2^n])) = 0$ by Theorem~\ref{thm:computeSha1}.
		\end{proof}
		
		There are, however, isotrivial curves over positive genus global fields of characteristic $2$ for which the local-global principle for divisibility by powers of $2$ can fail.
		
		\begin{Proposition}\label{prop:isoH1example}
			For the isotrivial elliptic curve $E: y^2+xy = x^3 + t^8x^2 + \omega$ over the field $k = \F_4(t,s)$, where $s^2+st=t^3+1, \omega^2+\omega + 1 = 0$ we have $\Sha(k,E) \not\subset 8\HH^1(k,E)$.
		\end{Proposition}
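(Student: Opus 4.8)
The plan is to follow the template of the proof of Proposition~\ref{prop:nonisoH1example}; the genuinely new ingredient is the behaviour of the $8$-torsion field over the positive-genus base $k = \F_4(C)$, where $C : s^2+st = t^3+1$. First I would compute $K := k(E[8](\ksep))$ using Lemma~\ref{lem:8tors}. With $a = t^8$ and $b = \omega$ one has $b^{1/8} = \omega^{1/8} = \omega^2 \in \F_4 \subseteq k$ (because $\omega^3 = 1$), so $K = k(u,v)$ with $u^2+u = t^8$ and $v^2+v = \omega$, and $\Gal(K/k) \simeq \Z/2 \times \Z/2$. Here $k(v)/k$ is the constant field extension $\F_{16}(C)/\F_4(C)$, which is unramified everywhere.

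The crucial claim is that $k(u)/k$ is also everywhere unramified. Modulo $\wp(k)$ one may replace $t^8$ by $t$ (since $t^8 = \wp(t^4+t^2+t)+t$), so $k(u)/k$ is the Artin--Schreier extension attached to the class of $t$ in $k/\wp(k)$. The function $t$ has a single pole on $C$, at the point at infinity $\infty = [1:0:0]$, of order $2$, and a local computation there (writing $t$ in the uniformizer $T$ and subtracting $\wp(T^{-1})$) shows that this double pole is entirely removed, so the class of $t$ is unramified, in fact split, at $\infty$; as $\infty$ is the only pole, $k(u)/k$ is unramified everywhere. It is \emph{not} the constant extension, since a function with a single simple pole would force $C$ to have genus $0$ whereas $C$ has genus $1$; rather $k(u)/k$ is a geometric \'etale double cover, corresponding to a $2$-torsion class of $\Jac(C)$. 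Consequently $K/k$ is everywhere unramified, so every decomposition group $D_\mathfrak{p} = \langle \mathrm{Frob}_\mathfrak{p}\rangle$ is cyclic and hence a proper subgroup of $\Gal(K/k) \simeq (\Z/2)^2$; Theorem~\ref{thm:computeSha1} then gives $\Sha^1(k,E[8]) \simeq \Z/2 \neq 0$. I expect this unramifiedness to be the main obstacle, and it is exactly where positive genus is used: over a genus-$0$ base the analogous cover is forced to ramify, producing a prime with full decomposition group and hence $\Sha^1 = 0$ (the mechanism behind Proposition~\ref{prop:iso}).

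The remaining task is to transfer the nonzero class in $\Sha^1(k,E[8])$ into $\Sha(k,E)$. Since $E$ is isotrivial, Theorem~\ref{thm:BSD} shows $\Sha(k,E)$ is finite and the Cassels--Tate pairing is nondegenerate. I would next argue that $E(k)$ has rank $0$: the substitution $y \mapsto y+ux$ identifies $E$ with the constant curve $E_0 : y^2+xy = x^3+\omega$ over $k(u)$, so $E(k) \hookrightarrow E_0(k(u))$, and by Lang--N\'eron the rank of the latter equals $\mathrm{rank}_\Z \Hom_{\F_4}(\Jac(C_u),E_0)$, where $C_u \to C$ is the \'etale double cover defining $k(u)$. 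As $\Jac(C_u)$ is $\F_4$-isogenous to $C$, and $C$ and $E_0$ have different numbers of $\F_4$-rational points (one computes $|C(\F_4)| = 8 \neq 4 = |E_0(\F_4)|$, so they are not $\F_4$-isogenous), this group of homomorphisms is trivial and the rank is $0$.

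Finally, with $E(k)$ of rank $0$, Lemma~\ref{lem:torsion} shows the map $\Sha^1(k,E[8]) \to \Sha(k,E)$ is injective: a nonzero kernel element would be $\delta(P)$ for a torsion point $P$ locally but not globally divisible by $8$, which the lemma forbids. Thus the image of $\Sha^1(k,E[8])$ in $\Sha(k,E)$ is nonzero, and since $\Sha(k,E)$ is finite its maximal divisible subgroup is trivial, so this image cannot lie in it. Theorem~\ref{thm:characterizedivisibility} then yields $\Sha(k,E) \not\subset 8\HH^1(k,E)$, as required.
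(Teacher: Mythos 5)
Your proof is correct and follows essentially the same route as the paper's: compute $K = k(u,v)$, observe that it is everywhere unramified with group $(\Z/2)^2$ so that Theorem~\ref{thm:computeSha1} gives $\Sha^1(k,E[8]) \ne 0$, show $E(k)$ has rank $0$ because $\Jac(C_u)$ and the constant curve $E_0$ are not $\F_4$-isogenous, and conclude via Lemma~\ref{lem:torsion}, Theorem~\ref{thm:BSD} and Theorem~\ref{thm:characterizedivisibility}. The only cosmetic differences are that you verify the unramifiedness of $k(u)/k$ at infinity by an explicit local computation (which the paper merely asserts) and detect the non-isogeny by comparing point counts over $\F_4$ rather than the quadratic fields $\Q(\sqrt{-7})$ and $\Q(\sqrt{-15})$ generated by the Frobenius eigenvalues.
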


\begin{proof}		
If $c^2+c=t^8$, the change of variables $y=y+cx$ changes the curve
to $y^2+xy = x^3 + \omega$. The $8$-torsion of the latter curve is
defined over $\F_{16}$. So, the $8$-torsion of the former curve is
defined over $K:=k(u,v), u^2 + u=t,v^2+v=\omega$, which is unramified over $k$ and has no inert primes, so Theorem \ref{thm:computeSha1} applies, showing that $\Sha^1(k,E[8])\ne 0$. We show below that $E(k)$ is finite. Then by Lemma~\ref{lem:torsion} the map $\Sha^1(k,E[8]) \to \Sha(k,E)$ is injective and nonzero. By Theorem \ref{thm:BSD}, $\Sha(k,E)$ is finite and the Cassels-Tate pairing on it is nondegenerate. We conclude that $\Sha(k,E) \not\subset 8\HH^1(k,E)$ by Theorem~\ref{thm:characterizedivisibility}.
	
	It remains to show that $E(k)$ is finite.  Both $k$ and $k(u,v)$ are function fields of isogenous elliptic curves and $k$ is the function field of an elliptic curve over ${\bf F}_2$ with $4$ points, so its eigenvalues of Frobenius are $(-1 \pm \sqrt{-7})/2$. On the other hand, $E$ is a twist of $y^2+xy=x^3+ \omega$ which is
an elliptic curve over ${\bf F}_4$ with $6$ points, so its eigenvalues of Frobenius are $(-1 \pm \sqrt{-15})/2$. The eigenvalues of the two curves live in different quadratic fields so they are not isogenous. Therefore $E(k)$ and $E(K)$ are torsion.
\end{proof}

	\begin{Proposition}\label{prop:isoH0example}
		Let $E/k$ be the isotrivial elliptic curve in Proposition~\ref{prop:isoH1example}. For every $n \ge 3$ there is a finite extension $L/k$ for which the local-global principle for divisibility by $2^n$ fails in $E(L)$.
	\end{Proposition}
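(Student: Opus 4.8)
The plan is to deduce the statement from Proposition~\ref{prop:L/k}: if I can show that $\Sha^1(k,E[2^n]) \neq 0$ for every $n \ge 3$, then for each such $n$ that proposition furnishes a finite separable extension $L/k$ and a point $P \in E(L)$ that is locally divisible by $2^n$ but not globally divisible by $2^n$, which is precisely a failure of the local--global principle for divisibility by $2^n$ in $E(L)$. The case $n=3$ is already contained in the proof of Proposition~\ref{prop:isoH1example}, so the real task is to propagate the nonvanishing of $\Sha^1(k,E[2^n])$ to all $n \ge 3$.

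To do this I would first record the relevant torsion fields. Set $K_n = k(E[2^n](\ksep))$ and $G_n = \Gal(K_n/k)$. As in Proposition~\ref{prop:isoH1example}, $E$ becomes isomorphic to the constant ordinary curve $E_0 : y^2+xy = x^3+\omega$ over the separable extension $K' = k(u)$ with $u^2+u=t$; hence $E[2^n](\ksep) \cong E_0[2^n](\ksep)$ is cyclic of order $2^n$ and Theorem~\ref{thm:computeSha1} applies. Let $F_n = k\cdot\F_4(E_0[2^n])$ be the constant extension of $k$ over which the torsion of $E_0$ becomes rational. Since $E \cong E_0$ over $K'$, the module $E[2^n](\ksep)$ is fixed by $\Gal(\ksep/K'F_n)$, so $K_n \subseteq K'F_n$.

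I would then verify the two conditions in Theorem~\ref{thm:computeSha1}. Noncyclicity of $G_n$ is automatic: the inclusion $E[8]\subset E[2^n]$ gives $K_3 = k(E[8](\ksep)) \subseteq K_n$ and hence a surjection $G_n \twoheadrightarrow G_3 = \Gal(K_3/k)$, and since $G_3 \cong \Z/2\Z\times\Z/2\Z$ is noncyclic by Proposition~\ref{prop:isoH1example} while a quotient of a cyclic group is cyclic, $G_n$ is noncyclic. For the decomposition groups, the extension $F_n/k$ is constant and therefore unramified at every place, and $K'/k$ is unramified by Proposition~\ref{prop:isoH1example}; their compositum, and hence $K_n$, is unramified over $k$. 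Thus every inertia subgroup of $G_n$ is trivial, every decomposition group is cyclic, and so no decomposition group can be isomorphic to the noncyclic group $G_n$. Theorem~\ref{thm:computeSha1} now yields $\Sha^1(k,E[2^n]) \cong \Z/2\Z$ for every $n\ge 3$, and Proposition~\ref{prop:L/k} completes the argument. The one step requiring genuine care is the unramifiedness of $K_n/k$ for all $n$: the geometric part is controlled by the single extension $K'$, whose unramifiedness is already established for $n=3$, while the remaining growth of $K_n$ with $n$ is purely a constant-field extension and hence everywhere unramified. This is exactly what makes noncyclicity and the decomposition-group condition hold uniformly in $n$, so that the $n=3$ computation of Proposition~\ref{prop:isoH1example} bootstraps to all $n \ge 3$.
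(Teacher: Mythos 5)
Your argument is correct, and it reaches the same intermediate goal as the paper --- namely $\Sha^1(k,E[2^n]) \ne 0$ for all $n \ge 3$, after which both you and the paper invoke Proposition~\ref{prop:L/k} --- but by a genuinely different route. The paper disposes of the nonvanishing in one line: since Proposition~\ref{prop:isoH1example} gives $\Sha(k,E) \not\subset 8\HH^1(k,E)$ and $2^n\HH^1(k,E) \subseteq 8\HH^1(k,E)$ for $n \ge 3$, the contrapositive of the second statement of Theorem~\ref{thm:characterizedivisibility} (with $E^* = E$) forces $\Sha^1(k,E[2^n]) \ne 0$. You instead redo the Galois-theoretic computation of Theorem~\ref{thm:computeSha1} uniformly in $n$: writing $K_n = k(E[2^n](\ksep)) \subseteq K'F_n$ with $K' = k(u)$ the (unramified, by Proposition~\ref{prop:isoH1example}) splitting field of the twist and $F_n$ a constant extension, you get that $K_n/k$ is everywhere unramified, so all decomposition groups are cyclic (generated by Frobenius), while $G_n$ surjects onto the noncyclic $G_3 \cong \Z/2\Z \times \Z/2\Z$ and hence is noncyclic; Theorem~\ref{thm:computeSha1} then gives $\Sha^1(k,E[2^n]) \cong \Z/2\Z$. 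Your approach costs a little more bookkeeping (the containment $K_n \subseteq K'F_n$ and the unramifiedness argument) but buys independence from the duality machinery of Theorem~\ref{thm:characterizedivisibility}, as well as the exact value of $\Sha^1(k,E[2^n])$ rather than mere nonvanishing; the paper's version is shorter precisely because Theorem~\ref{thm:characterizedivisibility} has already been paid for. Both arguments implicitly use that the extension $L$ produced by Proposition~\ref{prop:L/k} is finite, which holds because its proof constructs $L$ by adjoining a root of a polynomial.
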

	
	\begin{proof}
		Using Proposition~\ref{prop:L/k} it suffices to know that $\Sha^1(k,E[2^n]) \ne 0$. This follows from Theorem~\ref{thm:characterizedivisibility}, since Proposition~\ref{prop:isoH1example} shows that $\Sha^1(k,E) \not\subset 2^n\HH^1(k,E)$ for any $n \ge 3$.
	\end{proof}
	
	\section{The proof of Theorem~\ref{thm:characterizedivisibility}}\label{sec:ProofOfTheorem3}
	
		First note that the second statement of the theorem follows from the first since it is known that the Cassels-Tate pairing annihilates only the divisible subgroups \cite[III.9.5]{MilneADT} or \cite[Theorem 1.2]{G-Aduality}. It suffices to prove the first statement assuming $T$ has $p$-primary order.
				
		The sequence
			\begin{equation*}
				0 \to A[p^n] \to A \stackrel{p^n}\to A \to 0
			\end{equation*}	
			is exact on the flat site and gives rise to an exact sequence in flat cohomology. Let $\delta:\HH^1(k,A) \to \HH^2(k,A[p^n])$ be the boundary map arising from~\eqref{eq:KummerSeq}, and let $\iota: \Sha^1(k,A^*[p^n]) \to \Sha(k,A^*)$ be the map induced by the inclusion $A^*[p^n] \hookrightarrow A^*$. One has the  Cassels-Tate pairing,
		\[
			\langle\,,\,\rangle_1 : \Sha(k,A^*)(p) \times \Sha(k,A)(p) \to \Q/\Z
		\]
		(see \cite[Theorem 1.2]{G-Aduality} and \cite[Theorem II.5.6 \& Corollary III.9.5]{MilneADT}) as well as a perfect pairing of finite groups
		\[
			\langle\,,\,\rangle_2 :  \Sha^1(k,A^*[p^n]) \times \Sha^2(k,A[p^n]) \to \Q/\Z
		\]
		(see \cite[Theorem 1.1]{G-Aduality} and \cite[Theorem III.8.2 \& Proposition II.4.13]{MilneADT}). We claim that these pairings are compatible with $\delta$ and $\iota$ in the sense that for any $x \in \Sha^1(k,A^*[p^n])$ and $y \in \Sha(k,A)$,
		\[
			\langle \iota(x), y \rangle_1 = \langle x,\delta(y) \rangle_2\,.
		\]
		The theorem follows from this claim, because $\langle\,,\rangle_2$ is perfect, and $x$ is divisible by $p^n$ in $\HH^1(k,A)$ if and only if $\delta(x) = 0$.
			
		 Let $X$ be the unique smooth complete curve over the field of constants of $k$ having function field $k$. For a sufficiently small open affine subscheme $U \subset X$, we may extend $A$ and $A^*$ to dual abelian schemes $\calA$ and $\calA^*$ over $U$ and, for $0 \le i \le 2$, there are pairings as in the following diagram.
				
	\begin{equation}\label{eq:Pairings}
	\begin{array}[c]{ccccccc}
		(24) & \{\,,\,\}: &D^i(U,\calA^*)(p) & \times & D^{2-i}(U,\calA)(p) & \rightarrow & \Q/\Z \\
		&&\rotatebox{90}{$\hookleftarrow$} && \rotatebox{90}{$\twoheadrightarrow$}&& \rotatebox{90}{$=$}\\
		(13) & \langle \,,\, \rangle:& \HH^i(U,\calA^*)(p) & \times & \HH^{2-i}_c(U,\calA)(p) & \rightarrow & \Q/\Z\\
		&&\rotatebox{90}{$\rightarrow$}\,\theta && \rotatebox{90}{$\leftarrow$}\,\partial_c&& \rotatebox{90}{$\leftarrow$}\\
		(12) &[\,,\,]:&\HH^i(U,\calA^*[p^n]) & \times & \HH^{3-i}_c(U,\calA[p^n]) & \rightarrow & \Q_p/\Z_p\\
		&&\rotatebox{90}{$\hookrightarrow$} && \rotatebox{90}{$\twoheadleftarrow$}&& \rotatebox{90}{$=$}\\
		\text{Lemma 4.7} &(\,,\,):&D^i(U,\calA^*[p^n]) & \times & D^{3-i}(U,\calA[p^n]) & \rightarrow & \Q_p/\Z_p
	\end{array}
	\end{equation}
	(The labels in the left column indicate where the pairing is defined in \cite{G-Aduality}) 
	
	The injective (resp. surjective) maps from (resp. to) the $D^\bullet$ terms are the canonical maps given by the definition of these groups (op. cit. pages 211 and 221). The pairings $\{\,,\}$ and $(\,,\,)$ are induced by the other pairings via these maps. The maps $\theta$ and $\partial_c$ are as in op. cit. Remark 5.3, which shows that they are compatible with the pairings. Therefore \eqref{eq:Pairings} is a commutative diagram of pairings. To prove the claim we use this in the case $i = 1$.
	
	When $U$ is sufficiently small the canonical map $\HH^1(U,\calA^*)(p) \to \HH^1(k,A^*)(p)$ is injective (op. cit. Lemma 6.2), and $D^1(U,\calA^*[p^n]) = \Sha^1(k,A^*[p^n])$ (op. cit. Proposition 4.6). The map $\theta$ is induced by the inclusion $\calA^*[p^n] \hookrightarrow \calA^*$, so the image of $x \in \Sha^1(k,A^*[p^n]) = D^1(U,\calA^*[p^n])$ under the composition
	\[
		D^1(U,\calA^*[p^n]) \to \HH^1(U,\calA^*[p^n]) \stackrel{\theta}\to \HH^1(U,\calA^*) \hookrightarrow \HH^1(k,\calA^*)
	 \]
	 is $\iota(x)$. 
	
	The map $\partial_c$ sits in a commutative diagram with exact rows
	\begin{equation}\label{eq:diag2}
		\xymatrix{
					\HH^1_c(U,\calA) \ar[r]^{p^n}
					&\HH^1_c(U,\calA) \ar[r]^{\partial_c}\ar[d] 
					&\HH^2_c(U,\calA[p^n]) \ar[d]\\
					\HH^1(U,\calA) \ar[r]^{p^n}
					&\HH^1(U,\calA) \ar[r]^{\partial}\ar[dr]
					&\HH^2(U,\calA[p^n])\ar[dr]\\
					&\HH^1(k,A) \ar[r]^{p^n}&\HH^1(k,A) \ar[r]^\delta&\HH^2(k,A[p^n])
		}
	\end{equation}
	(op. cit. proof of Lemma 5.6). The vertical maps in this diagram are the composition of the surjective maps in~\eqref{eq:Pairings} with the canonical inclusions $D^\bullet(U,\star) \subset \HH^\bullet(U,\star)$. For any $U$, the image of $D^2(U,\calA[p^n])$ in $\HH^2(k,\calA[p^n])$ is equal to $\Sha^2(k,A[p^n])$ (proof of op. cit. Proposition 4.5). Lemma 6.5 of op. cit. shows that, provided $U$ is sufficiently small, $D^1(U,\calA)(p)=\Sha(k,A)(p)$. Taken together, this implies that if $U$ is sufficiently small and $y' \in \HH^1_c(U,\calA)$ is a lift of $y \in D^{1}(U,\calA)(p) = \Sha^1(k,\calA)(p)$, then the image of $y'$ under the composition
	\[
		\HH^1_c(U,\calA) \stackrel{\partial_c}\to \HH^2_c(U,\calA[p^n]) \to D^2(U,\calA[p^n]) \to \Sha^2(k,A[p^n])
	\]
	is equal to $\delta(y)$. 
	
	Theorems 4.8 and 6.6 of op. cit. show that $\langle\,,\,\rangle_1$ and $\langle\,,\,\rangle_2$ are given by $\{\,,\}$ and $(\,,\,)$, respectively, provided $U$ is taken to be sufficiently small. Thus we have $\langle \iota(x), y \rangle_1 = \langle x,\delta(y) \rangle_2$ as required. This completes the proof of Theorem~\ref{thm:characterizedivisibility}.

\section*{Acknowledgements}
We would like to thank D. Ulmer for the argument mentioned in Remark \ref{rem:oneplace}, C. Gonz\'alez-Avil\'es for helpful comments including pointing us to \cite[Example C.4.3]{CGP}, and J. Stix for many helpful suggestions on the exposition. The second author would like to also thank the Simons Foundation (Grant \# 234591) for financial support.


	\begin{bibdiv}
		\begin{biblist}

	\bib{Bashmakov}{article}{
	   author={Ba{\v{s}}makov, M. I.},
	   title={Cohomology of Abelian varieties over a number field},
	   language={Russian},
	   journal={Uspehi Mat. Nauk},
	   volume={27},
	   date={1972},
	   number={6(168)},
	   pages={25--66},
	   issn={0042-1316},
	}
		
	\bib{CasselsIII}{article}{
	   author={Cassels, J. W. S.},
	   title={Arithmetic on curves of genus $1$. III. The Tate-\v Safarevi\v c
	   and Selmer groups},
	   journal={Proc. London Math. Soc. (3)},
	   volume={12},
	   date={1962},
	   pages={259--296},
	   issn={0024-6115},
	}
	
	\bib{CasselsIV}{article}{
	   author={Cassels, J. W. S.},
	   title={Arithmetic on curves of genus $1$. IV. Proof of the
	   Hauptvermutung},
	   journal={J. Reine Angew. Math.},
	   volume={211},
	   date={1962},
	   pages={95--112},
	   issn={0075-4102},
	}
	
	\bib{CasselsFlynn}{book}{
	   author={Cassels, J. W. S.},
	   author={Flynn, E. V.},
	   title={Prolegomena to a middlebrow arithmetic of curves of genus $2$},
	   series={London Mathematical Society Lecture Note Series},
	   volume={230},
	   publisher={Cambridge University Press, Cambridge},
	   date={1996},
	   pages={xiv+219},
	   isbn={0-521-48370-0},
	}
		
	\bib{CipStix}{article}{
	   author={{\c{C}}iperiani, Mirela},
	   author={Stix, Jakob},
	   title={Weil-Ch\^atelet divisible elements in Tate-Shafarevich groups II:
	   On a question of Cassels},
	   journal={J. Reine Angew. Math.},
	   volume={700},
	   date={2015},
	   pages={175--207},
	   issn={0075-4102},
	}

	\bib{CGP}{book}{
	   author={Conrad, Brian},
	   author={Gabber, Ofer},
	   author={Prasad, Gopal},
	   title={Pseudo-reductive groups},
	   series={New Mathematical Monographs},
	   volume={26},
	   edition={2},
	   publisher={Cambridge University Press, Cambridge},
	   date={2015},
	   pages={xxiv+665},
	   isbn={978-1-107-08723-1},
	}

	\bib{CreutzWCDiv}{article}{
	  author={Creutz, Brendan},
	   title={Locally trivial torsors that are not Weil-Ch\^atelet divisible},
	   journal={Bull. Lond. Math. Soc.},
	   volume={45},
	   date={2013},
	   number={5},
	   pages={935--942},
	   issn={0024-6093},
	}
	
	\bib{CreutzWCDiv2}{article}{
	 author={Creutz, Brendan},
	 title={On the local-global principle for divisibility in the cohomology of elliptic curves},
	 journal={Math. Res. Lett.},
	 volume={23},
	 number={2},
	 date={2016},
	 pages={377--387}
	 }
		
	\bib{DZ1}{article}{
	   author={Dvornicich, Roberto},
	   author={Zannier, Umberto},
	   title={Local-global divisibility of rational points in some commutative
	   algebraic groups},
	   language={English, with English and French summaries},
	   journal={Bull. Soc. Math. France},
	   volume={129},
	   date={2001},
	   number={3},
	   pages={317--338},
	   issn={0037-9484},
	}			
	
	\bib{DZexamples}{article}{
	   author={Dvornicich, Roberto},
	   author={Zannier, Umberto},
	   title={An analogue for elliptic curves of the Grunwald-Wang example},
	   language={English, with English and French summaries},
	   journal={C. R. Math. Acad. Sci. Paris},
	   volume={338},
	   date={2004},
	   number={1},
	   pages={47--50},
	   issn={1631-073X},
	}
					
	\bib{DZ2}{article}{
	   author={Dvornicich, Roberto},
	   author={Zannier, Umberto},
	   title={On a local-global principle for the divisibility of a rational
	   point by a positive integer},
	   journal={Bull. Lond. Math. Soc.},
	   volume={39},
	   date={2007},
	   number={1},
	   pages={27--34},
	   issn={0024-6093},
	}

	\bib{FriedJarden}{book}{
	   author={Fried, Michael D.},
	   author={Jarden, Moshe},
	   title={Field arithmetic},
	   series={Ergebnisse der Mathematik und ihrer Grenzgebiete (3) [Results in
	   Mathematics and Related Areas (3)]},
	   volume={11},
	   publisher={Springer-Verlag, Berlin},
	   date={1986},
	   pages={xviii+458},
	   isbn={3-540-16640-8},
	}

	\bib{GilleSzamuely}{book}{
	   author={Gille, Philippe},
	   author={Szamuely, Tam{\'a}s},
	   title={Central simple algebras and Galois cohomology},
	   series={Cambridge Studies in Advanced Mathematics},
	   volume={101},
	   publisher={Cambridge University Press, Cambridge},
	   date={2006},
	   pages={xii+343},
	   isbn={978-0-521-86103-8},
	   isbn={0-521-86103-9},
	}

	\bib{GA-THassePrinciple}{article}{
	   author={Gonz{\'a}lez-Avil{\'e}s, Cristian D.},
	   author={Tan, Ki-Seng},
	   title={On the Hasse principle for finite group schemes over global
	   function fields},
	   journal={Math. Res. Lett.},
	   volume={19},
	   date={2012},
	   number={2},
	   pages={453--460},
	   issn={1073-2780},
	}


	\bib{G-Aduality}{article}{
	   author={Gonz{\'a}lez-Avil{\'e}s, Cristian D.},
	   title={Arithmetic duality theorems for 1-motives over function fields},
	   journal={J. Reine Angew. Math.},
	   volume={632},
	   date={2009},
	   pages={203--231},
	   issn={0075-4102},
	}

	%

	
        \bib{Illusie1979}{article}{
           author = {Illusie, Luc},
           title = {Complexe de de\thinspace {R}ham-{W}itt et cohomologie
              cristalline},
           journal = {Ann. Sci. \'Ecole Norm. Sup. (4)},
           volume = {12},
           year = {1979},
           number = {4},
           pages = {501--661},
           issn = {0012-9593},
	}	
	\bib{KatzMazur}{book}{
	   author={Katz, Nicholas M.},
	   author={Mazur, Barry},
	   title={Arithmetic moduli of elliptic curves},
	   series={Annals of Mathematics Studies},
	   volume={108},
	   publisher={Princeton University Press, Princeton, NJ},
	   date={1985}
	   pages={xiv+514},
   	   isbn={0-691-08349-5}
}

	\bib{LawsonWuthrich}{article}{
 	   author={Lawson, Tyler},
 	   author={Wuthrich, Christian},
 	   title={Vanishing of some cohomology groups for elliptic curves},
 	   eprint={arXiv:1505.02940v2}
	}

	\bib{MilneADT}{book}{
	   author={Milne, J. S.},
	   title={Arithmetic duality theorems},
	   series={Perspectives in Mathematics},
	   volume={1},
	   publisher={Academic Press, Inc., Boston, MA},
	   date={1986},
	   pages={x+421},
	   isbn={0-12-498040-6},
	}

         \bib{Milne1975}{article}{
            author={Milne, J. S.},
            title={On a conjecture of {A}rtin and {T}ate},
            journal={Annals of Mathematics. Second Series},
            volume={102},
            date={1975},
            number={3},
            pages={517--533},
            issn={0003-486X},
	}	
	
	\bib{CNF}{book}{
	   author={Neukirch, J{\"u}rgen},
	   author={Schmidt, Alexander},
	   author={Wingberg, Kay},
	   title={Cohomology of number fields},
	   series={Grundlehren der Mathematischen Wissenschaften [Fundamental
	   Principles of Mathematical Sciences]},
	   volume={323},
	   edition={2},
	   publisher={Springer-Verlag},
	   place={Berlin},
	   date={2008},
	   pages={xvi+825},
	   isbn={978-3-540-37888-4},
	}
	
	\bib{PRV-2}{article}{
	   author={Paladino, Laura},
	   author={Ranieri, Gabriele},
	   author={Viada, Evelina},
	   title={On local-global divisibility by $p^n$ in elliptic curves},
	   journal={Bull. Lond. Math. Soc.},
	   volume={44},
	   date={2012},
	   number={4},
	   pages={789--802},
	   issn={0024-6093},
	}
	
	\bib{PRV-3}{article}{
	   author={Paladino, Laura},
	   author={Ranieri, Gabriele},
	   author={Viada, Evelina},
	   title={On the minimal set for counterexamples to the local-global
	   principle},
	   journal={J. Algebra},
	   volume={415},
	   date={2014},
	   pages={290--304},
	   issn={0021-8693},
	}
	
	\bib{PoonenStoll}{article}{
	   author={Poonen, Bjorn},
	   author={Stoll, Michael},
	   title={The Cassels-Tate pairing on polarized abelian varieties},
	   journal={Ann. of Math. (2)},
	   volume={150},
	   date={1999},
	   number={3},
	   pages={1109--1149},
	   issn={0003-486X},
	}
	

	\bib{Serre1964}{article}{
	   author={Serre, Jean-Pierre},
	   title={Sur les groupes de congruence des vari\'et\'es ab\'eliennes},
	   language={French, with Russian summary},
	   journal={Izv. Akad. Nauk SSSR Ser. Mat.},
	   volume={28},
	   date={1964},
	   pages={3--20},
	   issn={0373-2436},
	}
	
%
%
	\bib{Tate-BSD}{article}{
	author={Tate, John},
	title={On the conjectures of Birch and Swinnerton-Dyer
and a geometric analog},
	note={S\'eminaire Bourbaki. Vol. 9, Exp. No. 306},
	pages={415--440},
        publisher={Soc. Math. de France, Paris},
	date={1964},
        }
%
	
	\bib{Ulmer_pdesc}{article}{
	   author={Ulmer, Douglas L.},
	   title={$p$-descent in characteristic $p$},
	   journal={Duke Math. J.},
	   volume={62},
	   date={1991},
	   number={2},
	   pages={237--265},
	   issn={0012-7094},
	}
	
	\bib{Voloch_pdesc}{article}{
	   author={Voloch, J. F.},
	   title={Explicit $p$-descent for elliptic curves in characteristic $p$},
	   journal={Compositio Math.},
	   volume={74},
	   date={1990},
	   number={3},
	   pages={247--258},
	   issn={0010-437X},
	}

	\end{biblist}
\end{bibdiv}

\end{document}